\newtheorem{theorem}{Theorem}
\newtheorem{lemma}[theorem]{Lemma}
\newcommand{\diam}{{\rm diam}}
\tikzstyle{vertex}=[circle, draw, inner sep=0pt, minimum size=6pt]
\newcommand{\QEDmark}{\mbox{\textsc{qed}}}
\newcommand{\proofStarter}[1]{\textsc{#1}}
\begin{document}

\title{Injective coloring of product graphs\vspace{7mm}}
\date{}
\author {
Babak Samadi$^a$\thanks{Corresponding author}, Nasrin Soltankhah$^b$ and Ismael G. Yero$^c$\vspace{1.5mm}\\
$^{a,b}$Department of Mathematics, Faculty of Mathematical Sciences, Alzahra University,\\
Tehran, Iran\\
{\tt b.samadi@alzahra.ac.ir}\\
{soltan@alzahra.ac.ir}\vspace{1.5mm}\\
$^c$Departamento de Matem\'{a}ticas, Universidad de C\'{a}diz, ETSI Algeciras, Spain\\
{\tt ismael.gonzalez@uca.es}\vspace{3mm}\\
}
\date{}
\maketitle

\begin{abstract}
The problem of injective coloring in graphs can be revisited through two different approaches: coloring the two-step graphs and vertex partitioning of graphs into open packing sets, each of which is equivalent to the injective coloring problem itself. Taking these facts into account, we observe that the injective coloring lies between graph coloring and domination theory.

We make use of these three points of view in this paper so as to investigate the injective coloring of some well-known graph products. We bound the injective chromatic number of direct and lexicographic product graphs from below and above. In particular, we completely determine this parameter for the direct product of two cycles. We also give a closed formula for the corona product of two graphs.


\end{abstract}

\textbf{Keywords}: Injective coloring; open packing partitions; two-step graphs; vertex coloring; lexicographic product; direct product; strong product; Cartesian product; open packing; $2$-distance coloring.\vspace{1mm}

\textbf{2010 Mathematical Subject Classification:} 05C15; 05C69; 05C76.


\section{Introduction}

Throughout this paper, we consider $G$ as a finite simple graph with vertex set $V(G)$ and edge set $E(G)$. The {\em open neighborhood} of a vertex $v$ is denoted by $N_{G}(v)$, and its {\em closed neighborhood} is $N_{G}[v]=N_{G}(v)\cup \{v\}$. The {\em minimum} and {\em maximum degrees} of $G$ are denoted by $\delta(G)$ and $\Delta(G)$, respectively. Given the subsets $A,B\subseteq V(G)$, by $[A,B]$ we mean the set of edges with one end point in $A$ and the other in $B$. Finally, for a given set $S\subseteq V(G)$, by $G[S]$ we represent the subgraph of $G$ induced by $S$. We use \cite{we} as a reference for terminology and notation which are not explicitly defined here.

\subsection{Main terminology}

For all four standard products of graphs $G$ and $H$ (according to \cite{ImKl}), the vertex set of the product is $V(G)\times V(H)$. Their edge sets are defined as follows.
\begin{itemize}
\item In the \emph{Cartesian product} $G\square H$ two vertices are adjacent if they are adjacent in one coordinate and equal in the other.
\item In the \emph{direct product} $G\times H$ two vertices are adjacent if they are adjacent in both coordinates.
\item The edge set of the \emph{strong product} $G\boxtimes H$ is the union of $E(G\square H)$ and $E(G\times H)$.
\item Two vertices $(g,h)$ and $(g',h')$ are adjacent in the \emph{lexicographic product} $G\circ H$ if either $gg'\in E(G)$ or ``$g=g'$ and $hh'\in E(H)$''.
\end{itemize}
Note that all these four products are associative and only the first three ones are commutative, while the lexicographic product is not (see \cite{ImKl}).

A function $f:V(G)\rightarrow\{1,\dots,k\}$ is an {\em injective $k$-coloring function} if no vertex $v$ is adjacent to two vertices $u$ and $w$ with $f(u)=f(w)$. For such a function $f$, the set of color classes $\big{\{}\{v\in V(G)\mid f(v)=i\}\big{\}}_{1\leq i\leq k}$ is an \emph{injective $k$-coloring} of $G$ (or simply an \textit{injective coloring} if $k$ is clear from the context). The minimum $k$ for which a graph $G$ admits an injective $k$-coloring is the {\em injective chromatic number} $\chi_{i}(G)$ of $G$. Injective colorings were introduced in \cite{hkss}, and further studied in \cite{bsy,jxz,pp,sy} for just some examples.

Another approach to the injective coloring of graphs, which is indeed previous to the idea of injective colorings, can be presented as follows. The {\em two-step graph} $\mathcal{N}(G)$ of a graph $G$ is the graph having the same vertex set as $G$ with an edge joining two vertices in $\mathcal{N}(G)$ if and only if they have a common neighbor in $G$. These graphs were introduced in \cite{av} and investigated later in \cite{bd}, \cite{eh} and \cite{lr}. Since a vertex subset $S$ is independent in $\mathcal{N}(G)$ if and only if every two vertices of $S$ have no common neighbor in $G$, we readily observe that
\begin{equation}\label{open-previous}
\chi_{i}(G)=\chi\big{(}\mathcal{N}(G)\big{)},
\end{equation}
in which $\chi$ is the well-known chromatic number. This exposition is centered into giving some contributions to the injective coloring of graph products.

\subsection{Related concepts and plan of the article}

This subsection is devoted to show some strong relationships between the above-mentioned concept and other ones related to domination theory. A subset $S\subseteq V(G)$ is a {\em dominating set} (resp. \textit{total dominating set}) if each vertex in $V(G)\backslash S$ \big{(}resp. $V(G)$\big{)} has at least one neighbor in $S$. The {\em domination number} $\gamma(G)$ \big{(}resp. \textit{total domination number} $\gamma_{t}(G)$\big{)} is the minimum cardinality among all dominating sets (resp. total dominating sets) in $G$. For more information on domination theory, the reader can consult \cite{hhh,hhh-1}.

The study of distance coloring of graphs was initiated by Kramer and Kramer (\cite{kk2} and \cite{kk1}) in $1969$. A {\em $2$-distance coloring} (or, $2$DC for short) of a graph $G$ is a mapping of $V(G)$ to a set of colors (nonnegative integers) by which any two vertices at distance at most two receive different colors. The minimum number of colors $k$ for which there is a $2$DC of $G$ is called the {\em $2$-distance chromatic number} $\chi_{2}(G)$ of $G$.

By a $\chi_{i}(G)$-coloring and a $\chi_{2}(G)$-coloring we mean an injective coloring and a $2$DC of $G$ of cardinality $\chi_{i}(G)$ and $\chi_{2}(G)$, respectively.

On the other hand, problems regarding vertex partitioning are classical in graph theory. In fact, there are many different ways for such partitioning into sets satisfying a specific property. For instance, when dealing with ``domination'' (resp. ``total domination"), the problem of finding the maximum cardinality of a vertex partition of a graph $G$ into dominating sets (resp. total dominating sets) has been widely investigated in the literature. The study of the associated parameter, called {\em domatic number} $d(G)$ \big{(}resp. \textit{total domatic number} $d_{t}(G)$\big{)}, was first carried out in \cite{ch} (resp. \cite{cdh}), see also the books \cite{hhh-2,hhh} . Also, a topic connected to domination is that one of packings. Based on such close relationships, one would find interesting to consider graph partitioning problems regarding packing sets. In this concern, a subset $B\subseteq V(G)$ is called a {\em packing} (or a {\em packing set}) in $G$ if for each distinct vertices $u,v\in B$, $N_{G}[u]\cap N_{G}[v]=\emptyset$ (equivalently, $B$ is a packing in $G$ if $|N_{G}[v]\cap B|\leq1$ for all $v\in B$). The {\em packing number} $\rho(G)$ is the maximum cardinality among all packing sets in $G$. In connection with this, a vertex partition $\mathbb{P}=\{P_{1},\dots,P_{|\mathbb{P}|}\}$ of a graph $G$ is called a {\em packing partition} if $P_{i}$ is a packing in $G$ for each $1\leq i\leq|\mathbb{P}|$. The {\em packing partition number} $p(G)$ is the minimum cardinality among all such partitions of $G$.

In contrast with the construction of $\mathcal{N}(G)$ for a graph $G$, the {\em closed neighborhood graph} $\mathcal{N}_{c}(G)$ of a graph $G$ has vertex set $V(G)$, and two distinct vertices $u$ and $v$ are adjacent in $\mathcal{N}_{c}(G)$ if and only if $N_{G}[u]\cap N_{G}[v]\neq \emptyset$ (see \cite{bkr}). With this in mind, we observe that a $2$DC of a graph $G$ is the same as a coloring of its closed neighborhood graph. It is also easily seen that $\mathcal{N}_{c}(G)$ is isomorphic to the {\em square} $G^{2}$ of $G$. We, in addition, note that the $2$DC problem is equivalent to the problem of vertex partitioning of a graph into packings. Therefore, we altogether observe that
\begin{equation}\label{closed}
\chi_{2}(G)=\chi(G^{2})=\chi\big{(}\mathcal{N}_{c}(G)\big{)}=p(G).
\end{equation}

A natural situation, concerning domination and packings, comes across while considering the study of related parameters in which open neighborhoods are used, instead of closed neighborhoods. Indeed, a subset $B\subseteq V(G)$ is said to be an {\em open packing} (or an {\em open packing set}) in $G$ if for any distinct vertices $u,v\in B$, $N_{G}(u)\cap N_{G}(v)=\emptyset$. The {\em open packing number}, denoted by $\rho_{o}(G)$, is the maximum cardinality among all open packing sets in $G$ (see \cite{hs}).

Motivated by the existence of packing partitions and open packing sets, we might say that a vertex partition $\mathbb{P}=\{P_{1},\dots,P_{|\mathbb{P}|}\}$ of a graph $G$ is an {\em open packing partition} (OPP for short) if $P_{i}$ is an open packing in $G$ for each $1\leq i\leq|\mathbb{P}|$. The {\em open packing partition number} $p_{o}(G)$ is the minimum cardinality among all OPPs of $G$. In this paper, we investigate this kind of vertex partitioning of graphs. However, it turns out that such partitions can be considered from other approaches since we readily observe that for any $k$-injective coloring function $f$, the partition $\big{\{}\{v\in V(G)\mid f(v)=i\}\big{\}}_{1\leq i\leq k}$ forms an OPP of $G$ and vice versa. This fact, together with \eqref{open-previous} and the idea of two-step graphs, leads to
\begin{equation}\label{open}
\chi_{i}(G)=\chi\big{(}\mathcal{N}(G)\big{)}=p_{o}(G),
\end{equation}
which is an open analogue of (\ref{closed}). This establishes another relationship between graph coloring and domination theory. In connection with this, we next give some contributions to the injective chromatic number (or OPP number) of graphs (or equivalently, to the chromatic number of two-step graphs) with emphasis on graph products. In this sense, from now on, we indistinctively use the three terminologies in concordance with the best use in each situation.

This paper is organized as follows. We consider the direct, lexicographic and corona products in order to study $\chi_{i}(G*H)$ in general case, in which $*\in\{\times,\circ,\odot\}$ (here $\odot$ represents the corona product, which is not exactly a standard product as defined in \cite{ImKl}, but it can be taken more as a graph operation). Sharp lower and upper bounds are exhibited on the injective chromatic number when dealing with the direct and lexicographic product graphs. In the case of corona product graphs, we give a closed formula for this parameter and prove that it assumes all values given in the formula. In particular, a counterexample to a formula given in \cite{grt} concerning the $2$-distance chromatic number of lexicographic product graphs is presented. Moreover, when dealing with the direct product graphs, we completely determine the injective chromatic number of direct product of two cycles by using the new tools given in this paper and some classical results in the literature.


\section{Direct product graphs}

Let $u\in V(G)$ be a vertex of maximum degree and let $\mathbb{B}=\{B_{1},\dots,B_{\chi_{i}(G)}\}$ be a $\chi_{i}(G)$-coloring. Because $B_{j}$ is an open packing in $G$ for each $1\leq j\leq \Delta(G)$, $u$ has at most one neighbor in $B_{j}$ and hence $\sum_{j=1}^{\chi_{i}(G)}|N(u)\cap B_{j}|\leq \chi_{i}(G)$. So,
\begin{equation}\label{EQ1}
\chi_{i}(G)\geq \Delta(G).
\end{equation}
This simple but important inequality will turn out to be useful in some places in this paper.


\subsection{General case}

We here bound the injective chromatic number of direct product graphs from above and below. To this end, given two graphs $G$ and $H$ defined on the same vertex set, by $G\sqcup H$ we mean the graph with vertex set $V(G\sqcup H)=V(G)=V(H)$ and edge set $E(G\sqcup H)=E(G)\cup E(H)$. We assume that $I_{G}$ and $I_{H}$ are the sets of isolated vertices of $G$ and $H$, respectively. Let $G^{-}$ and $H^{-}$ be the graphs obtained from $G$ and $H$ by removing all isolated vertices from $G$ and $H$, respectively. Moreover, by $G+H$ we represent the disjoint union of two graphs $G$ and $H$.

We proceed with the following lemma which will have important roles throughout this section.

\begin{lemma}\label{Iso}
Let $G$ and $H$ be any graphs. Then,
$$\mathcal{N}(G\times H)\cong \big{(}\mathcal{N}(G^{-})\boxtimes \mathcal{N}(H^{-})\big{)}^{-}+\overline{K_{p}},$$
in which $p$ is the cardinality of
$$I_{\mathcal{N}(G\times H)}=\{(g,h)\mid \mbox{for each $(g',h')$, $N_{G}(g)\cap N_{G}(g')=\emptyset$ or $N_{H}(h)\cap N_{H}(h')=\emptyset$}\}.$$
\end{lemma}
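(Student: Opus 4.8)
The plan is to establish the isomorphism by carefully analyzing when two vertices $(g,h)$ and $(g',h')$ are adjacent in $\mathcal{N}(G\times H)$, translating this into conditions on the two coordinates separately, and matching these conditions against the edge set of the strong product $\mathcal{N}(G^-)\boxtimes\mathcal{N}(H^-)$. The key observation is that $(g,h)$ and $(g',h')$ have a common neighbor in $G\times H$ if and only if there exists $(x,y)$ adjacent to both; by the definition of the direct product, this means $x$ is a common neighbor of $g,g'$ in $G$ \emph{and} $y$ is a common neighbor of $h,h'$ in $H$. Thus $(g,h)\sim(g',h')$ in $\mathcal{N}(G\times H)$ precisely when $N_G(g)\cap N_G(g')\neq\emptyset$ and $N_H(h)\cap N_H(h')\neq\emptyset$ simultaneously.

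First I would recast this adjacency condition in terms of the two-step graphs of the coordinate factors. Writing $g\sim_{\mathcal N} g'$ to mean $g$ and $g'$ share a common neighbor in $G$ (i.e. they are adjacent or equal-with-common-neighbor in $\mathcal{N}(G)$, being careful that a vertex shares a neighbor with itself iff it is non-isolated), the condition becomes: $g$ and $g'$ are joined or identified in $\mathcal{N}(G)$ in the appropriate sense, and likewise for $h,h'$ in $\mathcal{N}(H)$. The subtlety of allowing $g=g'$ (so long as $g$ has a neighbor, forcing a loop-like self-relation) is exactly why the \emph{strong} product appears: in $\mathcal{N}(G^-)\boxtimes\mathcal{N}(H^-)$, two vertices $(g,h),(g',h')$ are adjacent when they agree or are adjacent in each coordinate (not both agreeing). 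I would show that after deleting isolated vertices via the $G^-,H^-$ reduction—which removes vertices that can never have a common neighbor—the ``adjacent-or-equal in both coordinates'' relation of the strong product matches the common-neighbor relation of $\mathcal{N}(G\times H)$ on its non-isolated part.

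Next I would account for the isolated vertices explicitly. A vertex $(g,h)$ is isolated in $\mathcal{N}(G\times H)$ exactly when it shares no common neighbor with any other vertex, which (via the coordinate analysis) is the stated set $I_{\mathcal{N}(G\times H)}$: for every $(g',h')$, at least one coordinate fails to share a neighbor. These $p$ vertices contribute the $\overline{K_p}$ summand. The remaining vertices, after also stripping the vertices that become isolated inside $\mathcal{N}(G^-)\boxtimes\mathcal{N}(H^-)$ (hence the outer $(\cdot)^-$), carry the genuine edges, and I would verify that the vertex sets and adjacencies on both sides coincide, giving the claimed decomposition as a disjoint union of the reduced strong product and an edgeless graph on the isolated vertices.

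The main obstacle I anticipate is the bookkeeping around the equal-coordinate case and the two layers of isolated-vertex removal. Specifically, one must carefully distinguish (i) vertices isolated in $G$ or $H$ (handled by passing to $G^-,H^-$), (ii) the self-adjacency issue whereby $(g,h)$ and $(g,h')$ with $g\neq$-isolated can share a neighbor in the $g$-coordinate trivially, and (iii) vertices that survive into the strong product but are isolated there. Ensuring that $I_{\mathcal{N}(G\times H)}$ is \emph{exactly} the complement of the vertex set of $\big(\mathcal{N}(G^-)\boxtimes\mathcal{N}(H^-)\big)^-$, with no double counting or omission, is the delicate step; the rest is a direct verification that the adjacency relations agree on the common vertex set.
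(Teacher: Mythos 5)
Your proposal is correct and takes essentially the same route as the paper: a common neighbor $(x,y)$ of $(g,h)$ and $(g',h')$ in $G\times H$ decomposes coordinate-wise, so adjacency in $\mathcal{N}(G\times H)$ holds exactly when $N_{G}(g)\cap N_{G}(g')\neq\emptyset$ and $N_{H}(h)\cap N_{H}(h')\neq\emptyset$, which---splitting the equal-coordinate case from the distinct-coordinate case, as you do---is precisely strong-product adjacency in $\mathcal{N}(G^{-})\boxtimes \mathcal{N}(H^{-})$. The paper's proof performs exactly this verification (showing the identity map gives $E\big{(}\mathcal{N}(G\times H)^{-}\big{)}=E\Big{(}\big{(}\mathcal{N}(G^{-})\boxtimes \mathcal{N}(H^{-})\big{)}^{-}\Big{)}$ and then splitting off the isolated vertices as $\overline{K_{p}}$), matching your bookkeeping step.
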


\begin{proof}
It is readily observed that
$$\{(g,h)\mid \mbox{for each $(g',h')$, $N_{G}(g)\cap N_{G}(g')=\emptyset$ or $N_{H}(h)\cap N_{H}(h')=\emptyset$}\}$$
is the set of isolated vertices of $\mathcal{N}(G\times H)$. We next recall that
$$\mathcal{N}(G^{-})\boxtimes \mathcal{N}(H^{-})=\big{(}\mathcal{N}(G^{-})\times \mathcal{N}(H^{-})\big{)}\sqcup \big{(}\mathcal{N}(G^{-})\square \mathcal{N}(H^{-})\big{)}.$$

Let $(g,h)(g',h')$ be an edge in $\mathcal{N}(G\times H)^{-}$. By symmetry, we may assume that $h\neq h'$. Therefore, there exists a vertex $(g'',h'')$ adjacent to both $(g,h)$ and $(g',h')$ in $G\times H$. This means that $g''g,g''g'\in E(G)$ and $h''h,h''h'\in E(H)$. In particular, we have $g,g',g''$ and $h,h',h''$ are vertices of $G^{-}$ and $H^{-}$, respectively. Therefore, both $N_{G^{-}}(g)\cap N_{G^{-}}(g')$ and $N_{H^{-}}(h)\cap N_{H^{-}}(h')$ are nonempty. We now have ``$gg'\in E\big{(}\mathcal{N}(G^{-})\big{)}$ and $hh'\in E\big{(}\mathcal{N}(H^{-})\big{)}$" if $g\neq g'$, and ``$g=g'$ and $hh'\in E\big{(}\mathcal{N}(H^{-})\big{)}$" otherwise. Consequently, $(g,h)(g',h')\in E\big{(}\mathcal{N}(G^{-})\times \mathcal{N}(H^{-})\big{)}$ if $g\neq g'$, and $(g,h)(g',h')\in E\big{(}\mathcal{N}(G^{-})\square \mathcal{N}(H^{-})\big{)}$ otherwise. Thus, $(g,h)(g',h')$ is an edge of $\mathcal{N}(G^{-})\boxtimes \mathcal{N}(H^{-})$. More precisely, $(g,h)(g',h')$ is an edge of $\big{(}\mathcal{N}(G^{-})\boxtimes \mathcal{N}(H^{-})\big{)}^{-}$.

Conversely, let $(g,h)(g',h')$ be an edge in $\big{(}\mathcal{N}(G^{-})\boxtimes \mathcal{N}(H^{-})\big{)}^{-}$. Without loss of generality, we may assume that $h\neq h'$. It is easily checked that the converse of the above implications hold. So, $(g,h)(g',h')$ is an edge of $\mathcal{N}(G\times H)^{-}$. In fact, we have proved that $E\big{(}\mathcal{N}(G\times H)^{-}\big{)}=E\big{(}\big{(}\mathcal{N}(G^{-})\boxtimes \mathcal{N}(H^{-})\big{)}^{-}\big{)}$. Now the identity mapping $(g,h)\rightarrow(g,h)$ serves as an isomorphism between $\mathcal{N}(G\times H)^{-}$ and $\big{(}\mathcal{N}(G^{-})\boxtimes \mathcal{N}(H^{-})\big{)}^{-}$.

On the other hand, it is clear that $\mathcal{N}(G\times H)\cong \mathcal{N}(G\times H)^{-}+\overline{K_{p}}$ in which $p$ is the cardinality of
$$I_{\mathcal{N}(G\times H)}=\{(g,h)\mid \mbox{for each $(g',h')$, $N_{G}(g)\cap N_{G}(g')=\emptyset$ or $N_{H}(h)\cap N_{H}(h')=\emptyset$}\}.$$
In fact, we have proved that
$$\mathcal{N}(G\times H)\cong \big{(}\mathcal{N}(G^{-})\boxtimes \mathcal{N}(H^{-})\big{)}^{-}+\overline{K_{p}},$$
as desired.
\end{proof}

If $\{G_{1},\dots,G_{r}\}$ and $\{H_{1},\dots,H_{s}\}$ are the sets of components of $G$ and $H$, respectively, then it is clear that $\chi_{i}(G\times H)=\sum_{i,j}\chi_{i}(G_{i}\times H_{j})$. So, we may assume that both $G$ and $H$ are connected. Moreover, $G\times H$ will be an empty graph if either $G$ or $H$ is isomorphic to $K_1$. Therefore, it only suffices to assume that $|V(G)|,|V(H)|\geq 2$.

\begin{theorem}\label{direct}
Let $G$ and $H$ be two connected graphs of orders at least two. If $G\cong K_{2}$ or $H\cong K_{2}$, then $\chi_{i}(G\times H)=\max\{\chi_{i}(G),\chi_{i}(H)\}$. If $\Delta(G),\Delta(H)\geq2$, then
$$\max\{\chi_{i}(G)+\Delta(H),\chi_{i}(H)+\Delta(G)\}\leq \chi_{i}(G\times H)\leq \chi_{i}(G)\chi_{i}(H).$$
These bounds are sharp.
\end{theorem}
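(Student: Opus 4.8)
The plan is to pass from $\chi_{i}(G\times H)$ to the chromatic number of a strong product and then bound that product from both sides. First I would use \eqref{open} to write $\chi_{i}(G\times H)=\chi(\mathcal{N}(G\times H))$ and apply Lemma~\ref{Iso}. Since $G$ and $H$ are connected of order at least two, they have no isolated vertices, so $G^{-}=G$ and $H^{-}=H$, and the lemma reads $\mathcal{N}(G\times H)\cong(\mathcal{N}(G)\boxtimes\mathcal{N}(H))^{-}+\overline{K_{p}}$. Because $\Delta(G),\Delta(H)\ge 2$, each of $\mathcal{N}(G)$ and $\mathcal{N}(H)$ contains an edge (the two neighbours of a vertex of degree at least two share that vertex as a common neighbour), so $\mathcal{N}(G)\boxtimes\mathcal{N}(H)$ contains an edge; consequently deleting isolated vertices and adjoining the independent set $\overline{K_{p}}$ leaves the chromatic number unchanged, and I obtain the working identity $\chi_{i}(G\times H)=\chi(\mathcal{N}(G)\boxtimes\mathcal{N}(H))$.

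For the upper bound I would use the standard product colouring of a strong product: colouring $(x,y)$ by the pair $(c(x),c'(y))$, where $c$ and $c'$ are optimal colourings of $\mathcal{N}(G)$ and $\mathcal{N}(H)$, is proper on $\mathcal{N}(G)\boxtimes\mathcal{N}(H)$, whence $\chi(\mathcal{N}(G)\boxtimes\mathcal{N}(H))\le\chi(\mathcal{N}(G))\chi(\mathcal{N}(H))=\chi_{i}(G)\chi_{i}(H)$.

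The substance is the lower bound; by symmetry it suffices to prove $\chi_{i}(G\times H)\ge\chi_{i}(G)+\Delta(H)$. Fix $h_{0}\in V(H)$ of degree $\Delta(H)$ with neighbours $h_{1},\dots,h_{\Delta(H)}$; these pairwise share $h_{0}$ as a common neighbour, so they induce a clique $K_{\Delta(H)}$ in $\mathcal{N}(H)$. Restricting to $W=V(G)\times\{h_{1},\dots,h_{\Delta(H)}\}$, the induced subgraph of $\mathcal{N}(G)\boxtimes\mathcal{N}(H)$ is isomorphic to $\mathcal{N}(G)\boxtimes K_{\Delta(H)}$, the lexicographic blow-up $\mathcal{N}(G)\circ K_{\Delta(H)}$. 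In an optimal colouring each row $\{g\}\times\{h_{1},\dots,h_{\Delta(H)}\}$ is a clique and hence receives a set $S_{g}$ of $\Delta(H)$ distinct colours, while adjacent rows (those with $gg'\in E(\mathcal{N}(G))$) receive disjoint sets; thus $\{S_{g}\}$ is a $\Delta(H)$-fold colouring of $\mathcal{N}(G)$ using $\chi_{i}(G\times H)$ colours. It then remains to prove the sharpened additive estimate: if a graph $A$ has an edge and $m\ge 2$, then every $m$-fold colouring of $A$ uses at least $\chi(A)+m$ colours. I would prove this by setting $\phi(g)=\min S_{g}$, which is a proper colouring of $A$ into $\{1,\dots,k-m+1\}$ (with $k$ the number of colours used), noting that the top class $T=\{g:S_{g}=\{k-m+1,\dots,k\}\}$ is independent and that every neighbour of a vertex of $T$ has $\min$-colour at most $k-2m+1<k-m$, so recolouring all of $T$ with the colour $k-m$ yields a proper colouring of $A$ into $\{1,\dots,k-m\}$; here the presence of an edge forces $k\ge 2m$ so that $k-m\ge 1$, and $m\ge 2$ gives the strict inequality needed for properness. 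Applying this with $A=\mathcal{N}(G)$ (which has an edge since $\Delta(G)\ge 2$) and $m=\Delta(H)$ gives $\chi_{i}(G\times H)\ge\chi(\mathcal{N}(G))+\Delta(H)=\chi_{i}(G)+\Delta(H)$. The hard part is exactly obtaining this $+\Delta(H)$ rather than the $+\Delta(H)-1$ that the bare $\min$-colour argument produces; the recolouring of the top class, legitimised by $\Delta(G),\Delta(H)\ge 2$, is what closes the gap.

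Finally, for the case $G\cong K_{2}$ (the case $H\cong K_{2}$ being symmetric) I would note that $\mathcal{N}(K_{2})\cong\overline{K_{2}}$, so $\mathcal{N}(K_{2})\boxtimes\mathcal{N}(H)$ is the disjoint union of two copies of $\mathcal{N}(H)$; hence $\chi_{i}(K_{2}\times H)=\chi(\mathcal{N}(H))=\chi_{i}(H)=\max\{\chi_{i}(K_{2}),\chi_{i}(H)\}$, since $\chi_{i}(K_{2})=1$. For sharpness I would take $G=H=K_{n}$ with $n\ge 3$, where $\mathcal{N}(K_{n})\cong K_{n}$ forces $\mathcal{N}(K_{n})\boxtimes\mathcal{N}(K_{n})\cong K_{n^{2}}$ and thus $\chi_{i}(K_{n}\times K_{n})=n^{2}=\chi_{i}(K_{n})\chi_{i}(K_{n})$, attaining the upper bound; the lower bound is attained by suitable products of cycles (for instance odd cycles, where $\chi_{i}(C_{n})=3$ and $\Delta(C_{n})=2$), whose exact values are computed in the dedicated section on direct products of cycles.
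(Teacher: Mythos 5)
Your proposal is correct and follows the same architecture as the paper's proof: both use \eqref{open} and Lemma~\ref{Iso} to reduce to $\chi_{i}(G\times H)=\chi\big{(}\mathcal{N}(G)\boxtimes \mathcal{N}(H)\big{)}$ (noting $G^{-}=G$, $H^{-}=H$ for connected factors of order at least two), both get the upper bound from the standard pair-colouring of a strong product, both obtain the lower bound by locating a clique of size $\Delta$ in the two-step graph of one factor (the paper uses $K_{\Delta(G)}$ inside $\mathcal{N}(G)$ and bounds $\chi\big{(}K_{\Delta(G)}\boxtimes\mathcal{N}(H)\big{)}$ from below; you do the symmetric thing with $K_{\Delta(H)}\subseteq\mathcal{N}(H)$), and both use the same sharpness witnesses, namely $K_{r}\times K_{t}$ and odd cycles via $\chi(C_{2r+1}\boxtimes C_{2t+1})=5$. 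The one genuine divergence is at the crux of the lower bound: where the paper simply cites \cite{Jha} (see also \cite{Klav}) for the inequality $\chi(K_{m}\boxtimes B)\ge m+\chi(B)$, you prove it from scratch by reading a proper colouring of $B\boxtimes K_{m}$ as an assignment of pairwise disjoint $m$-sets $S_{g}$ to the vertices of $B$ (an $m$-fold colouring) and showing that any such colouring with $k$ colours forces $k\ge\chi(B)+m$ when $m\ge2$ and $B$ has an edge. I checked your recolouring argument and it is sound: $\phi(g)=\min S_{g}$ is proper with range in $\{1,\dots,k-m+1\}$, the top class $T=\phi^{-1}(k-m+1)$ is independent, its neighbours have sets contained in $\{1,\dots,k-m\}$ and hence minimum at most $k-2m+1<k-m$, so recolouring $T$ with $k-m$ (which exists since an edge forces $k\ge2m$) yields a proper $(k-m)$-colouring. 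This is in effect a hands-on special case of the known $m$-fold colouring bound $\chi_{m}(B)\ge\chi(B)+2(m-1)$, and what it buys is a self-contained proof in place of an external citation, at the cost of a slightly weaker additive constant, which is exactly what the theorem needs. The only soft spot is that for sharpness of the lower bound you defer the value $\chi_{i}(C_{2r+1}\times C_{2t+1})=5$ to the later section on direct products of cycles instead of verifying it; the paper performs this computation on the spot from Vesztergombi's theorem, so nothing is missing in substance, but your write-up should at least invoke $\chi(C_{2r+1}\boxtimes C_{2t+1})=5$ for $r,t\ge2$ explicitly (Lemma~\ref{Vesz}, \cite{Klav,Veszter}) to close that loop.
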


\begin{proof}
By using \eqref{open} and Lemma \ref{Iso}, we get
$$\chi_{i}(G\times H)=\chi\big{(}\mathcal{N}(G\times H)\big{)}=\chi\Big{(}\big{(}\mathcal{N}(G^{-})\boxtimes \mathcal{N}(H^{-})\big{)}^{-}+\overline{K_{p}}\Big{)}=\chi\big{(}\mathcal{N}(G^{-})\boxtimes \mathcal{N}(H^{-})\big{)}.$$
Thus, we deduce that $\chi_{i}(G\times H)\le \chi\big{(}\mathcal{N}(G^{-})\big{)}\chi\big{(}\mathcal{N}(H^{-})\big{)}=\chi_{i}(G^{-})\chi_{i}(H^{-})=\chi_{i}(G)\chi_{i}(H)$ since the chromatic number of the strong product of two graphs is always at most the product of chromatic numbers of the factors (see \cite{Klav}). A trivial example that shows the tightness of this upper bound is the direct product graph $K_r\times K_t$ for $r,t\geq3$.

On the other hand, $\chi_{i}(G\times H)=\chi\big{(}\mathcal{N}(G^{-})\boxtimes \mathcal{N}(H^{-})\big{)}\ge \max\{ \chi\big{(}\mathcal{N}(G^{-})\big{)},\chi\big{(}\mathcal{N}(H^{-})\big{)}\}=\max\{\chi_{i}(G^{-}),\chi_{i}(H^{-})\}=\max\{\chi_{i}(G),\chi_{i}(H)\}$.

If $G\cong K_2$, then $\mathcal{N}(G)$ is an empty graph and $\chi_{i}(G)=1$. Thus,
$$\chi_{i}(G\times H)=\max\{\chi_{i}(G),\chi_{i}(H)\}=\chi_{i}(G)\chi_{i}(H)=\chi_{i}(H).$$
A similar situation happens if $H\cong K_2$.

Now, let $\Delta(G),\Delta(H)\geq2$. Then, $G$ has $\Delta(G)$ vertices having a common neighbor. Thus, $\omega\big{(}\mathcal{N}(G)\big{)}\geq \Delta(G)$, in which $\omega$ denotes the clique number. This leads to
$$\chi\big{(}\mathcal{N}(G^{-})\boxtimes \mathcal{N}(H^{-})\big{)}\ge \chi\big{(}K_{\Delta(G)}\boxtimes \mathcal{N}(H^{-})\big{)}\ge \Delta(G)+\chi\big{(}\mathcal{N}(H^{-})\big{)}.$$
The second inequality follows from \cite{Jha} (see also \cite{Klav}). Consequently, by \eqref{open} and Lemma \ref{Iso}, we deduce that $\chi_{i}(G\times H)\ge \chi_{i}(H)+\Delta(G)$. Analogously, we get $\chi_{i}(G\times H)\ge \chi_{i}(G)+\Delta(H)$, and therefore $\chi_{i}(G\times H)\geq \max\{\chi_{i}(G)+\Delta(H),\chi_{i}(H)+\Delta(G)\}$.

To see the sharpness of the bound, we consider the direct product $C_{2r+1}\times C_{2t+1}$ with $r,t\ge 2$. From \cite{Klav}, it is known that $\chi(C_{2r+1}\boxtimes C_{2t+1})=5$. Thus, we have $\chi_{i}(C_{2r+1}\times C_{2t+1})=\chi\big{(}\mathcal{N}(C_{2r+1}\times C_{2t+1})\big{)}=\chi\big{(}\mathcal{N}(C_{2r+1})\boxtimes \mathcal{N}(C_{2t+1})\big{)}=\chi(C_{2r+1}\boxtimes C_{2t+1})=5=\max\{\chi_{i}(C_{2r+1}),\chi_{i}(C_{2t+1})\}+2$, where the third equality comes from the fact that the two-step graph of an odd cycle is isomorphic to itself.
\end{proof}


\subsection{Direct product of two cycles}

The $2$-distance chromatic number of the Cartesian product of two cycles (namely, the torus graphs) has been widely investigated in several papers (for example, see \cite{chmm}, \cite{sv} and \cite{sw}). In $2015$, Chegini et al. (\cite{chmm}) proved that $\chi_{2}(C_{m}\square C_{n})\leq6$ for all integers $m,n\geq10$. Also, the injective chromatic number of torus graphs has recently been studied in \cite{yl}.

Regarding the direct product of two cycles, Kim et al. \cite{ksr} proved the following result.

\begin{theorem}\label{Kim}
The following statements hold.\vspace{1mm}\\
$(i)$ If $m\geq40$ and $n\geq48$ are even, then
\begin{equation*}
\chi_{2}(C_{m}\times C_{n})=\left \{
\begin{array}{lll}
5 & \mbox{if $m,n\equiv0$ $($mod $5$$)$,}\\
6 & \mbox{otherwise.}\\
\end{array}
\right.
\end{equation*}
$(ii)$ If $m\geq40$ is even and $n\geq25$ is odd, then
\begin{equation*}
\chi_{2}(C_{m}\times C_{n})=\left \{
\begin{array}{lll}
5 & \mbox{if $m,n\equiv0$ $($mod $5$$)$,}\\
6 & \mbox{otherwise.}\\
\end{array}
\right.
\end{equation*}
$(iii)$ If $m\geq45$ and $n\geq53$ are odd, then
\begin{equation*}
\chi_{2}(C_{m}\times C_{n})=\left \{
\begin{array}{lll}
5 & \mbox{if $m,n\equiv0$ $($mod $5$$)$,}\\
6 & \mbox{otherwise.}\\
\end{array}
\right.
\end{equation*}
\end{theorem}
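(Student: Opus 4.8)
The plan is to work with the square $G^2$ of $G=C_m\times C_n$, since $\chi_2(G)=\chi(G^2)$ by \eqref{closed}. First I would record the local structure: writing vertices as $(i,j)\in\mathbb{Z}_m\times\mathbb{Z}_n$, the neighbours of $(i,j)$ in $C_m\times C_n$ are the four diagonal vertices $(i\pm1,j\pm1)$, so two vertices lie at distance at most two exactly when their difference belongs to $D=\{(\pm1,\pm1)\}\cup\{(\pm2,0),(0,\pm2),(\pm2,\pm2)\}$. The decisive observation is the $45^{\circ}$ change of coordinates $u=i+j$, $v=i-j$: in these coordinates the adjacencies of $C_m\times C_n$ become $(u\pm2,v)$ and $(u,v\pm2)$, i.e. (after rescaling by $2$) the adjacencies of a torus $C_{m'}\square C_{n'}$, while the set $D$ turns into the punctured $\ell_1$-ball of radius two, namely $\{(\pm1,0),(0,\pm1),(\pm1,\pm1),(\pm2,0),(0,\pm2)\}$. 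Hence $G^2$ is isomorphic to $(C_{m'}\square C_{n'})^2$ (this is a disjoint union of two such graphs when both $m,n$ are even, but the two pieces are isomorphic via a translation), and the problem reduces to the $2$-distance colouring of torus graphs. The three cases of the statement, together with the thresholds $40,48,25,45,53$, come precisely from translating $(m,n)$ into the torus dimensions $(m',n')$ through this lattice isomorphism and from requiring the constructions below to have room to wrap around.

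Next I would settle the two easy bounds. For the lower bound $\chi_2\ge5$, the five vertices $(0,0),(1,1),(1,-1),(-1,1),(-1,-1)$ are pairwise at distance at most two and hence form a clique in $G^2$. For the upper bound in the divisible case, the affine colouring $f(i,j)=(i+2j)\bmod 5$ assigns distinct colours to any two vertices whose difference lies in $D$ (one checks that $a=1$, $b=2$ satisfy $a,b,a+b,a-b\not\equiv0\pmod5$), and it is well defined on $\mathbb{Z}_m\times\mathbb{Z}_n$ precisely when $5\mid m$ and $5\mid n$; equivalently it is the perfect Lee code of radius one on the torus. This proves $\chi_2(C_m\times C_n)=5$ whenever $m,n\equiv0\pmod5$.

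For the general upper bound $\chi_2\le6$ I would exhibit an explicit $6$-periodic pattern on the torus and then patch it around the two cyclic directions. Since a fixed finite pattern rarely tiles an arbitrary $C_{m'}\square C_{n'}$ exactly, the construction would combine horizontal (and vertical) blocks of a few admissible widths whose lengths generate every sufficiently large integer in each residue class; this is exactly the point at which the numerical thresholds on $m$ and $n$ are consumed, and the parity of $m,n$ dictates which gluing is available.

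The hard part will be the matching lower bound $\chi_2\ge6$ when we do not have $5\mid m$ and $5\mid n$, i.e. showing that no $5$-colouring exists in that range. Here I would prove a rigidity statement: in the torus picture a proper $5$-colouring of $(C_{m'}\square C_{n'})^2$ forces each colour class to be a perfect code for the radius-one Lee sphere. This is a density argument, since the radius-one balls around the cells of one colour are disjoint, each such ball covers five cells, and the five classes partition $m'n'$ cells, so every ball must tile perfectly and every class must be a perfect code. Radius-one perfect Lee codes in $\mathbb{Z}^2$ are unique up to the symmetries of the lattice, namely the affine classes $i+2j\equiv c\pmod5$, so a $5$-colouring has to be affine, and an affine colouring closes up consistently around both cycles only when the relevant dimensions are divisible by $5$, which back-translates to $5\mid m$ and $5\mid n$. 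The two genuine difficulties inside this step are establishing uniqueness of the radius-one perfect Lee code on the torus (rather than on the infinite plane) and ruling out small sporadic non-affine $5$-colourings; the lower size bounds in the statement are exactly what make the torus large enough for the infinite-plane rigidity to apply and for such sporadic cases to be excluded.
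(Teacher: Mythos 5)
This theorem is not proved in the paper at all: it is quoted verbatim from Kim, Song and Rho \cite{ksr} as background motivation for the section on $\chi_i(C_m\times C_n)$, so there is no in-paper proof to compare against, and what you have written must stand on its own as a reconstruction of the argument of \cite{ksr}.

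As such, it has a genuine gap at its decisive step. The claim that the $45^\circ$ substitution $u=i+j$, $v=i-j$ identifies $G^2$ with $(C_{m'}\square C_{n'})^2$ is false in general. The substitution correctly turns the direct-product adjacencies into axis-parallel double steps, but it also transforms the wrap-around identifications: $(i,j)\sim(i+m,j)$ becomes $(u,v)\sim(u+m,v+m)$ and $(i,j)\sim(i,j+n)$ becomes $(u,v)\sim(u+n,v-n)$. Hence each component of $C_m\times C_n$ is the quotient of the grid by the lattice $L=\langle(m,m),(n,-n)\rangle$, whose generators are orthogonal to each other but rotated $45^\circ$ relative to the new step directions; this is a \emph{skew} (twisted) torus, not a rectangular one. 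The smallest axis-aligned vector in $L$ has length $2\,\mathrm{lcm}(m,n)$, so $L$ contains a rectangular sublattice of full index only when $\gcd(m,n)^2=2mn$, which essentially never holds; consequently you cannot transfer colorings or coloring results for $C_{m'}\square C_{n'}$ wholesale, because a valid coloring must be invariant under $L$ rather than under a rectangular lattice. (Your affine $5$-coloring survives this objection, since $f(i,j)=i+2j \bmod 5$ is checked directly on $\mathbb{Z}_m\times\mathbb{Z}_n$ and closes up exactly when $5\mid m$ and $5\mid n$; but the reduction you lean on for everything else does not.) The elaborate parity trichotomy (i)--(iii) and the thresholds $40,48,25,45,53$ in the statement are precisely the fingerprint of having to build $6$-colorings compatible with these skew identifications, which is the bulk of the work in \cite{ksr}.

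Beyond that, the two remaining pillars are left unexecuted. The upper bound $\chi_2\leq 6$ in the non-divisible cases is only promised (``exhibit an explicit $6$-periodic pattern and patch''), with no patterns, no gluing lemma, and no accounting for the skew wrap-around; and the lower bound $\chi_2\geq 6$ rests on the asserted uniqueness of radius-one perfect Lee codes (equivalently, of plane tilings by the cross pentomino) together with a lift-to-the-plane step for torus codes, neither of which you prove — you flag them yourself as the ``two genuine difficulties.'' Your density argument showing that any $5$-coloring forces every color class to be a perfect code is correct and is the right first move, and the $5$-clique giving $\chi_2\geq 5$ is fine; but as it stands the proposal is a programme whose central reduction is wrong as stated and whose two hardest components are placeholders. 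Note, for contrast, that the present paper sidesteps all of this for the \emph{injective} chromatic number: via Lemma \ref{Iso} and $\mathcal{N}(C_k)$ it reduces $\chi_i(C_m\times C_n)$ exactly to $\chi$ of strong products of cycles, where Vesztergombi's theorem and explicit small patterns suffice — that reduction works because $\mathcal{N}$ of a cycle is again a cycle or a disjoint union of cycles, a luxury unavailable for $\chi_2$.
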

In concordance with this, we see that the exact values of the $2$-distance chromatic number of $C_{m}\times C_{n}$ is not yet known for a large number of values of $m,n\geq3$. In contrast with this, in our investigation we make a related and complete study of the injective chromatic number of these graphs. First let
$$S(r,s)=\{\alpha r+\beta s\mid \mbox{$\alpha$ and $\beta$ are nonnegative integers}\}$$
for any two integers $r$ and $s$. We make use of the following useful lemma from number theory due to Sylvester.
\begin{lemma}\emph{(Sylvester \cite{jj})}\label{jjs}
Let $r$ and $s$ be relatively prime integers greater than one. Then $t\in S(r,s)$ for each $t\geq(r-1)(s-1)$, and $(r-1)(s-1)-1\notin S(r,s)$.
\end{lemma}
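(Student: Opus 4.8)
The plan is to treat the two assertions separately, using the hypothesis $\gcd(r,s)=1$ through the single structural fact that $\{0, s, 2s, \dots, (r-1)s\}$ is a complete residue system modulo $r$. This observation drives both halves of the lemma.

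For the first claim, that every $t \geq (r-1)(s-1)$ lies in $S(r,s)$, I would fix an arbitrary such $t$ and select the unique $\beta \in \{0, 1, \dots, r-1\}$ with $\beta s \equiv t \pmod r$; such a $\beta$ exists precisely because the $\beta s$ run over all residues modulo $r$. Writing $t - \beta s = \alpha r$, it remains only to verify that $\alpha \geq 0$, i.e.\ that $t \geq \beta s$. This is the quantitative heart of the argument: if instead $t < \beta s$, then since $t \geq 0$ and $t \equiv \beta s \pmod r$ we would have $t \leq \beta s - r \leq (r-1)s - r = (r-1)(s-1) - 1$, contradicting $t \geq (r-1)(s-1)$. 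Hence $\alpha \geq 0$ and $t = \alpha r + \beta s \in S(r,s)$.

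For the second claim, that $(r-1)(s-1)-1 = rs - r - s \notin S(r,s)$, I would argue by contradiction. Assuming $rs - r - s = \alpha r + \beta s$ with $\alpha, \beta \geq 0$, I would rearrange to $r(s - 1 - \alpha) = s(\beta + 1)$. Since $\gcd(r,s)=1$, $s$ must divide $s - 1 - \alpha$; as the right-hand side is strictly positive (because $\beta + 1 \geq 1$), the integer $s - 1 - \alpha$ is a positive multiple of $s$, forcing $s - 1 - \alpha \geq s$ and hence $\alpha \leq -1$, which contradicts $\alpha \geq 0$.

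The step I expect to require the most care is the bound $t \leq \beta s - r \leq (r-1)(s-1)-1$ in the first part: one must confirm that the extremal case occurs at $\beta = r-1$, so that the threshold $(r-1)(s-1)$ is genuinely the best possible and dovetails with the non-representable value $rs-r-s$ exhibited in the second part. Everything else follows routinely from the complete-residue-system observation.
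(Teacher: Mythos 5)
Your proof is correct: in the first half, the choice of the unique $\beta\in\{0,\dots,r-1\}$ with $\beta s\equiv t\ (\modo\ r)$ is legitimate because $\gcd(r,s)=1$ makes $\{0,s,\dots,(r-1)s\}$ a complete residue system modulo $r$, and the chain $t\leq \beta s-r\leq (r-1)s-r=(r-1)(s-1)-1$ cleanly rules out $\alpha<0$; in the second half, the divisibility argument from $r(s-1-\alpha)=s(\beta+1)$ with $\gcd(r,s)=1$ is airtight. Note that the paper itself offers no proof of this lemma at all --- it is imported verbatim as a classical result of Sylvester \cite{jj} --- so there is no in-paper argument to compare against; your complete-residue-system argument is the standard self-contained proof of this statement and fills the omission correctly.
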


Regarding injective coloring, the isomorphism
$$\mathcal{N}(G\times H)\cong \big{(}\mathcal{N}(G^{-})\boxtimes \mathcal{N}(H^{-})\big{)}^{-}+\overline{K_{p}},$$
with $p=|I_{\mathcal{N}(G\times H)}|$ (given in Lemma \ref{Iso}), provides us with a useful tool so as to obtain the exact values of $\chi_{i}(C_{m}\times C_{n})$ for all possible values of $m$ and $n$. Along with the above isomorphism, we shall need the following result due to Vesztergombi \cite{Veszter} in $1979$ (see also \cite{Klav}).

\begin{lemma}\emph{(\cite{Klav,Veszter})}\label{Vesz}
For any $s,t\geq2$, $\chi(C_{2s+1}\boxtimes C_{2t+1})=5$.
\end{lemma}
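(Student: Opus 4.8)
The plan is to prove the two inequalities $\chi(C_{2s+1}\boxtimes C_{2t+1})\ge 5$ and $\chi(C_{2s+1}\boxtimes C_{2t+1})\le 5$ separately, after first recording a concrete description of the graph. Identifying $V(C_{2s+1})=\mathbb{Z}_{2s+1}$ and $V(C_{2t+1})=\mathbb{Z}_{2t+1}$, two vertices $(i,j)$ and $(i',j')$ are adjacent in the strong product exactly when $i-i'\in\{-1,0,1\}$ and $j-j'\in\{-1,0,1\}$ (cyclically) and $(i,j)\neq(i',j')$; that is, $C_{2s+1}\boxtimes C_{2t+1}$ is the toroidal king graph. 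In particular every ``$2\times 2$ block'' $\{(i,j),(i+1,j),(i,j+1),(i+1,j+1)\}$ induces a $K_{4}$, and since $2s+1,2t+1\ge 5$ the three columns $j,j+1,j+2$ are genuinely distinct, so these blocks do not degenerate.

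For the lower bound I would argue by contradiction, assuming a proper $4$-coloring $c$ exists. Fix two consecutive rows $i,i+1$ and, for each column $j$, record the unordered pair $P_{j}=\{c(i,j),c(i+1,j)\}$ of colors appearing in that column; these two colors are distinct because the two cells are adjacent. Since the block on columns $j,j+1$ is a $K_{4}$ receiving all four colors, $P_{j}$ and $P_{j+1}$ partition $\{1,2,3,4\}$, i.e.\ $P_{j+1}=\overline{P_{j}}$ is the complementary pair. Hence $j\mapsto P_{j}$ flips between a fixed pair and its complement at each step, so after going once around the cycle of odd length $2t+1$ it has flipped an odd number of times, forcing $P_{0}=\overline{P_{0}}$, which is impossible. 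This yields $\chi(C_{2s+1}\boxtimes C_{2t+1})\ge 5$.

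For the upper bound the naive ``linear'' coloring $c(i,j)=(i+2j)\bmod 5$ is proper on the infinite king grid but does not descend to the torus unless $5$ divides both cycle lengths, so I would instead reduce to the case of $C_{5}$. The key observations are: (a) for every $s\ge 2$ there is a graph homomorphism $C_{2s+1}\to C_{5}$, since an odd cycle maps onto any shorter odd cycle by winding, and likewise $C_{2t+1}\to C_{5}$; (b) the strong product preserves homomorphisms, so the product map $C_{2s+1}\boxtimes C_{2t+1}\to C_{5}\boxtimes C_{5}$ is again a homomorphism, the only point to check being that the image of an edge is never a loop, which holds because an edge of the strong product differs in at least one coordinate and the homomorphism sends adjacent vertices to distinct ones there; and (c) $C_{5}\boxtimes C_{5}$ is $5$-colorable, since for cycle length exactly $5$ the map $c(i,j)=(i+2j)\bmod 5$ is well defined and proper, as each of the eight neighbour displacements $(\Delta i,\Delta j)$ changes the color by $\Delta i+2\Delta j\in\{\pm1,\pm2,\pm3\}$, none $\equiv 0\pmod 5$. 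Composing, $\chi(C_{2s+1}\boxtimes C_{2t+1})\le\chi(C_{5}\boxtimes C_{5})\le 5$.

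The main obstacle I anticipate is precisely the upper bound for cycle lengths not divisible by $5$: the periodicity obstruction that powers the lower bound is exactly what breaks the direct linear coloring, and the homomorphism reduction in step (b) is the crucial move that sidesteps an \emph{ad hoc} case analysis. The remaining verifications, namely that the winding map $C_{2s+1}\to C_{5}$ is a homomorphism for all $s\ge 2$ and the routine displacement check establishing properness of $c$ on $C_{5}\boxtimes C_{5}$, are elementary, so once the reduction is in place the proof closes quickly.
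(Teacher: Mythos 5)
Your proof is correct, but there is nothing in the paper to compare it against line by line: the authors import this lemma verbatim from Vesztergombi \cite{Veszter} (see also Klav\v{z}ar's survey \cite{Klav}) and give no proof, so what you have produced is a self-contained substitute for a cited result. Both halves of your argument check out, and they differ instructively from the techniques the paper deploys in the neighbouring even-cycle case (Claim~1 in the proof of Theorem~\ref{Direct-Cycles}). For the lower bound, your parity argument --- on two fixed adjacent rows each $2\times 2$ block induces a $K_4$, so the column color pairs $P_j$ must alternate with their complements, which is impossible around an odd column cycle --- is more elementary than the route the paper uses in the even-by-odd setting, where the bound $\chi\geq\lceil (2k)(2t+1)/(kt)\rceil=5$ is extracted from Vesel's independence-number formula \cite{Vesel}; your version needs no external input beyond the clique structure. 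For the upper bound, your reduction via the homomorphism $C_{2s+1}\boxtimes C_{2t+1}\to C_5\boxtimes C_5$ --- using that an odd cycle of length at least five maps homomorphically onto $C_5$ (this is exactly where $s,t\geq 2$ enters, and it explains conceptually why $C_3$ factors behave differently, since $C_3$ admits no homomorphism to $C_5$), that the strong product preserves homomorphisms (your edge-by-edge check, including that no edge collapses to a loop, is the right one), and that $(i,j)\mapsto (i+2j)\bmod 5$ properly $5$-colors $C_5\boxtimes C_5$ because every displacement $\Delta i+2\Delta j$ is nonzero modulo $5$ --- replaces the kind of explicit periodic-pattern constructions the paper builds elsewhere (patterns $A$--$D$ glued via Sylvester's Lemma~\ref{jjs}). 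The homomorphism route is shorter, avoids all pattern case analysis, and localizes the entire verification in the single small graph $C_5\boxtimes C_5$; the pattern route, by contrast, generalizes more readily to even cycle lengths, where the homomorphism to $C_5$ is unavailable. The remaining steps you deferred (the winding map and the displacement check) are indeed routine, so your argument is complete as it stands.
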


We are now in a position to present the main result of this section. Note that, in the following theorem, $\chi_{i}(C_{m}\times C_{n})$ for the other possible values for $m$ and $n$ which are not appeared here can be determined by taking into account the trivial isomorphism $C_{m}\times C_{n}\cong C_{n}\times C_{m}$ for all $m,n\geq3$.

\begin{theorem}\label{Direct-Cycles}
For any integers $m,n\geq3$,
$$\chi_{i}(C_{m}\times C_{n})=\left \{
\begin{array}{lll}
4 & \ \ \mbox{if $m,n\equiv0$ \emph{(}mod $4$\emph{)}},\\
5 & \ \ \mbox{if ``$m\neq6$ is even and $n\geq5$ is odd" or ``both $m,n\geq5$ are odd"}\\
  &  \ \ \mbox{or ``$(m,n)=(4s+2,4t+2)$ with $s,t\geq2$"}\\
  &   \ \ \mbox{or ``$(m,n)=(4,4t+2)$ with $t\geq2$"},\\
6 & \ \ \mbox{if $m=4s$ for $s\geq1$ and $t\in \{3,6\}$},\\
7 & \ \ \mbox{if $m\in \{3,6\}$ and $n=2t+1$ with $t\geq3$},\\
8 & \ \ \mbox{if $m\in \{3,6\}$ and $n=5$},\\
9 & \ \ \mbox{if $m\in \{3,6\}$ and $n=3$}.\\
\end{array}
\right.$$
\end{theorem}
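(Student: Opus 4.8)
The plan is to push everything through Lemma~\ref{Iso} so that the problem becomes one about chromatic numbers of strong products of small cycles. Since $C_m$ and $C_n$ have no isolated vertices, and since neither deleting isolated vertices nor adding a $\overline{K_p}$ changes the chromatic number, Lemma~\ref{Iso} together with \eqref{open} gives $\chi_{i}(C_m\times C_n)=\chi\big(\mathcal{N}(C_m)\boxtimes\mathcal{N}(C_n)\big)$. I would then describe $\mathcal{N}(C_k)$ explicitly: two vertices of $C_k$ have a common neighbour exactly when they are at distance two, so $\mathcal{N}(C_k)\cong C_k$ for odd $k$ and $\mathcal{N}(C_k)\cong C_{k/2}+C_{k/2}$ for even $k$, with the degenerate readings $C_2=K_2$ and $C_3=K_3$. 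Because $\chi\big((X+X)\boxtimes Y\big)=\chi(X\boxtimes Y)$, the two identical summands collapse and the whole theorem reduces to a finite table of values $\chi(C_a\boxtimes C_b)$, where $a$ and $b$ are the ``cores'' $c(k)=k$ ($k$ odd) and $c(k)=k/2$ ($k$ even); the cores that are not honest cycles of length at least $4$ are exactly $K_3$ (for $k\in\{3,6\}$) and $K_2$ (for $k=4$).

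For the lower bounds I would use clique and independence counts. If both cores are cycles of length at least $4$ then $\omega(C_a\boxtimes C_b)=\omega(C_a)\omega(C_b)=4$, and whenever a core is an odd cycle of length at least $5$ the product contains $K_2\boxtimes C_{2k+1}=C_{2k+1}[K_2]$, which already forces $\chi\ge5$. For a $K_3$-core the bound $\chi\ge\lceil |V|/\alpha\rceil$ is what produces the large values: since $K_3\boxtimes C_b=C_b[K_3]$ has $|V|=3b$ and $\alpha=\lfloor b/2\rfloor$, one gets $\chi\ge\lceil 3b/\lfloor b/2\rfloor\rceil$, equal to $7$ for odd $b\ge7$, to $8$ for $b=5$, and to $9$ for $b=3$ (the last being simply $K_3\boxtimes K_3=K_9$).

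For the upper bounds I would exploit homomorphisms together with a short list of base values. Every even cycle maps homomorphically to $K_2$, every odd cycle of length at least $5$ maps to $C_5$, and an odd cycle of length $b\ge7$ maps to $C_7$; since strong and lexicographic products respect homomorphisms coordinatewise, each product $C_a\boxtimes C_b$ maps into one of $K_2\boxtimes K_2=K_4$, $C_5[K_2]$, $C_5\boxtimes C_5$, $C_b[K_3]\to C_7[K_3]$, or $C_{2a}[K_3]\to K_2[K_3]=K_6$. This reduces all upper bounds to a handful of computations: $\chi(C_5\boxtimes C_5)=5$ is precisely Lemma~\ref{Vesz}, while $\chi(C_5[K_2])=5$, $\chi(C_7[K_3])=7$ and $\chi(C_5[K_3])=8$ follow from exhibiting explicit cyclic sequences of consecutively disjoint colour sets (for instance, colouring column $i$ of $C_7[K_3]$ by $\{3i,3i+1,3i+2\}\bmod 7$). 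Reading each admissible core pair against this table, and using $C_m\times C_n\cong C_n\times C_m$ for the symmetric entries, assembles the stated seven-branch formula.

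The main obstacle will be the constructive side of the $K_3$-cores: certifying that the threshold is $7$ rather than $6$ for every odd $b\ge7$, while isolating $b=5$ (value $8$) and $b=3$ (value $9$), and, more delicately, producing colourings that actually close up around a cycle of arbitrary admissible length. This is where Lemma~\ref{jjs} (Sylvester) is the natural tool: when an optimal colouring is built by concatenating periodic blocks of two coprime lengths, Sylvester's bound guarantees that all sufficiently long cycles can be tiled, so that only finitely many short lengths remain to be treated by hand. Making the bookkeeping of those exceptional short lengths---and of the degenerate cores $K_2$ and $K_3$---agree exactly with the branches of the formula is the part that needs the most care.
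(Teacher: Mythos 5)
Your proposal is correct, and after the shared opening move (Lemma~\ref{Iso} together with \eqref{open}, reducing everything to $\chi\big(\mathcal{N}(C_m)\boxtimes\mathcal{N}(C_n)\big)$ and identifying the cores $C_k$, $C_{k/2}+C_{k/2}$, $K_2$, $K_3$) it takes a genuinely different route on the hard, constructive side. The paper proves its Claim~1 (that $\chi(C_{2k}\boxtimes C_n)=5$ for $k\geq2$ and odd $n\geq5$) by hand: explicit $4\times5$, $4\times4$, $4\times7$ and $4\times11$ coloring patterns, Sylvester's lemma (Lemma~\ref{jjs}) to concatenate the coprime-length blocks so the colorings close up around cycles of every sufficiently large odd length, a row-duplication trick to pass from $C_{4t}$ to $C_{4t+2}$ factors, Vesel's independence-number result for the lower bound, and Klav\v{z}ar's cited formula $\chi(K_m\boxtimes C_{2n+1})=2m+\lceil m/n\rceil$ for the $K_3$-cores. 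Your homomorphism reduction replaces essentially all of this machinery: since the strong product is functorial in both coordinates, the maps $C_{\mathrm{even}}\to K_2$ and $C_{2t+1}\to C_{2s+1}$ (for $t\geq s$) collapse every case onto a handful of small targets --- $K_4$, $C_5[K_2]$, $C_5\boxtimes C_5$ (Lemma~\ref{Vesz}), $K_6$, $C_7[K_3]$, $C_5[K_3]$, $K_9$ --- each colored by a one-line cyclic-interval pattern and bounded below by $\omega$ or $\lceil|V|/\alpha\rceil$; in particular, the single computation $\chi(C_5[K_2])=5$ subsumes the paper's entire Claim~1 apparatus, and your $\lceil 3b/\lfloor b/2\rfloor\rceil$ count re-derives the needed instances of Klav\v{z}ar's formula, making the argument more self-contained. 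One consequence you seem not to have fully registered: because your upper bounds are pulled back along homomorphisms into fixed cycles, closure around factor cycles of arbitrary admissible length is automatic, so the worry in your final paragraph is moot and Sylvester's lemma is simply not needed in your scheme --- it is an artifact of the paper's pattern-tiling route. Two small points to tidy when writing it up: the statement ``common neighbour exactly when at distance two'' needs the caveat that adjacent vertices of $C_3$ share a neighbour (whence $\mathcal{N}(C_3)=K_3$, as your degenerate reading already records), and the lower bound $6$ for an even core paired with a $K_3$-core should be stated explicitly (trivially, $K_2\boxtimes K_3=K_6$ is a subgraph). What the paper's longer route buys in exchange is explicit optimal colorings of the product graphs themselves rather than colorings obtained by composition.
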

\begin{proof}
We distinguish three cases depending on the parity of $m$ and $n$ by taking into account the facts that $C_{m}\times C_{n}\cong C_{n}\times C_{m}$ and $C_{m}\boxtimes C_{n}\cong C_{n}\boxtimes C_{m}$ for each $m,n\geq3$.\vspace{1mm}\\
\textit{Case 1.} Both $m$ and $n$ are odd. Let $m=2s+1$ and $n=2t+1$ for some $s,t\geq1$. It can be easily observed that if $G$ is a cycle of order $n\geq 3$, then $\mathcal{N}(G)$ is also a cycle of order $n$ when $n$ is odd. Suppose first that $s,t\geq2$. We then deduce from (\ref{open-previous}), Lemma \ref{Iso} and Lemma \ref{Vesz} that
$$\chi_{i}(C_{2s+1}\times C_{2t+1})=\chi(C_{2s+1}\boxtimes C_{2t+1})=5.$$

We now assume by symmetry that one of the factors, say $C_{2s+1}$, is of order three. From \cite{Klav} we know that $\chi(K_{m}\boxtimes C_{2n+1})=2m+\lceil m/n\rceil$ for $m\geq1$ and $n\geq2$. This shows that $\chi_{i}(C_{3}\times C_{5})=8$ and that $\chi_{i}(C_{3}\times C_{2t+1})=7$ for $t\geq3$. On the other hand, it is readily seen that $\chi_{i}(C_{3}\times C_{3})=9$.\vspace{1mm}\\
\textit{Case 2.} Suppose that $m$ is even and $n=2t+1$ for some $t\geq1$. Suppose first that $m=4k+2$ for some $k\geq1$. Notice that if $G$ is a cycle of order $p$, then $\mathcal{N}(G)$ is the disjoint union of two cycles of order $p/2$ when $p\geq6$ is even, and it is $K_{2}+K_{2}$ if $p=4$. With this in mind, we conclude that
$$\chi_{i}(C_{m}\times C_{2t+1})=\chi\big{(}(C_{2k+1}+C_{2k+1})\boxtimes C_{2t+1}\big{)}=\chi(C_{2k+1}\boxtimes C_{2t+1}).$$
Therefore, $\chi_{i}(C_{4k+2}\times C_{2t+1})=5$ for all $k,t\geq2$ (by using Lemma \ref{Vesz}). So, we need to discuss the cases when $k=1$ and when $t=1$ separately. In particular, we have $\chi_{i}(C_{6}\times C_{5})=8$ and $\chi_{i}(C_{6}\times C_{2t+1})=7$ for $t\geq3$. Moreover, it is easy to see that $\chi_{i}(C_{6}\times C_{3})=\chi(C_{3}\boxtimes C_{3})=9$. Also, the possible values for $\chi_{i}(C_{4k+2}\times C_{3})=\chi(C_{2k+1}\boxtimes C_{3})$ has just been discussed.

We now assume that $m=4k$ for an integer $k\geq1$. In what follows, we take advantage of the following useful claim.\vspace{1mm}\\
Claim $1$. {\em Let $k\geq2$ be an integer. For any odd integer $n\geq3$,}
$$\chi(C_{2k}\boxtimes C_{n})=\left \{
\begin{array}{lll}
5 & \mbox{if $n\geq5$},\\
6 & \mbox{if $n=3$}.
\end{array}
\right.$$
\textit{Proof of Claim 1.}
We observe that $\omega(C_{3}\boxtimes C_{2k})=6$, in which $\omega$ denotes the clique number. Therefore, the pattern given in Figure \ref{Pat11} represents an optimal $6$-coloring of $C_{3}\boxtimes C_{2k}\cong C_{2k}\boxtimes C_{3}$ for each $k\geq2$. So, from now on, we assume that $n\geq5$ (odd).
\begin{figure}[ht]
 \centering
\begin{tikzpicture}[scale=.02, transform shape]
\node [draw, shape=circle] (v1) at (0,0) {};
\node [draw, shape=circle] (v2) at (120,0) {};
\node [draw, shape=circle] (v3) at (0,-60) {};
\node [draw, shape=circle] (v4) at (120,-60) {};

\node [scale=50] at (10,-10) {1};
\node [scale=50] at (10,-30) {2};
\node [scale=50] at (10,-50) {3};

\node [scale=50] at (30,-10) {4};
\node [scale=50] at (30,-30) {5};
\node [scale=50] at (30,-50) {6};

\node [scale=50] at (60,-10) {$\cdots$};
\node [scale=50] at (60,-30) {$\cdots$};
\node [scale=50] at (60,-50) {$\cdots$};

\node [scale=50] at (90,-10) {1};
\node [scale=50] at (90,-30) {2};
\node [scale=50] at (90,-50) {3};

\node [scale=50] at (110,-10) {4};
\node [scale=50] at (110,-30) {5};
\node [scale=50] at (110,-50) {6};

\draw (v1)--(v2)--(v4)--(v3)--(v1);

\end{tikzpicture}
  \caption{An optimal $6$-coloring of $C_{3}\boxtimes C_{2k}$ for each $k\geq2$.}\label{Pat11}
\end{figure}

It is shown in \cite{Vesel} that $\alpha(C_{2i}\boxtimes C_{2j+1})=ij$, for all positive integers $i\geq2$ and $j$, in which $\alpha$ stands for the independence number. Therefore,
\begin{equation}\label{five}
\chi(C_{2k}\boxtimes C_{2t+1})\geq \lceil(2k)(2t+1)/kt\rceil=5.
\end{equation}

It is readily checked that the patterns $A$ and $B$ depicted in Figure \ref{Pat22} represent $5$-colorings of $C_{4}\boxtimes C_{5}$ and $C_{4}\boxtimes C_{4}$, respectively. Moreover, the combined patterns $AB$ and $BA$ provide $5$-colorings of $C_{4}\boxtimes C_{9}$.
\begin{figure}[ht]
 \centering
\begin{tikzpicture}[scale=.02, transform shape]
\node [draw, shape=circle] (v1) at (-300,0) {};
\node [draw, shape=circle] (v2) at (-200,0) {};
\node [draw, shape=circle] (v3) at (-300,-80) {};
\node [draw, shape=circle] (v4) at (-200,-80) {};

\node [scale=50] at (-290,-10) {1};
\node [scale=50] at (-290,-30) {3};
\node [scale=50] at (-290,-50) {5};
\node [scale=50] at (-290,-70) {3};

\node [scale=50] at (-270,-10) {2};
\node [scale=50] at (-270,-30) {4};
\node [scale=50] at (-270,-50) {1};
\node [scale=50] at (-270,-70) {4};

\node [scale=50] at (-250,-10) {3};
\node [scale=50] at (-250,-30) {5};
\node [scale=50] at (-250,-50) {2};
\node [scale=50] at (-250,-70) {5};

\node [scale=50] at (-230,-10) {4};
\node [scale=50] at (-230,-30) {1};
\node [scale=50] at (-230,-50) {3};
\node [scale=50] at (-230,-70) {1};

\node [scale=50] at (-210,-10) {5};
\node [scale=50] at (-210,-30) {2};
\node [scale=50] at (-210,-50) {4};
\node [scale=50] at (-210,-70) {2};

\draw (v1)--(v2)--(v4)--(v3)--(v1);
\node [scale=50] at (-315,-40) {$A$:};


\node [draw, shape=circle] (u1) at (-140,0) {};
\node [draw, shape=circle] (u2) at (-60,0) {};
\node [draw, shape=circle] (u3) at (-140,-80) {};
\node [draw, shape=circle] (u4) at (-60,-80) {};

\node [scale=50] at (-130,-10) {4};
\node [scale=50] at (-130,-30) {1};
\node [scale=50] at (-130,-50) {3};
\node [scale=50] at (-130,-70) {1};

\node [scale=50] at (-110,-10) {5};
\node [scale=50] at (-110,-30) {2};
\node [scale=50] at (-110,-50) {4};
\node [scale=50] at (-110,-70) {2};

\node [scale=50] at (-90,-10) {4};
\node [scale=50] at (-90,-30) {1};
\node [scale=50] at (-90,-50) {3};
\node [scale=50] at (-90,-70) {1};

\node [scale=50] at (-70,-10) {5};
\node [scale=50] at (-70,-30) {2};
\node [scale=50] at (-70,-50) {4};
\node [scale=50] at (-70,-70) {2};

\draw (u1)--(u2)--(u4)--(u3)--(u1);
\node [scale=50] at (-155,-40) {$B$:};


\node [draw, shape=circle] (v1) at (0,0) {};
\node [draw, shape=circle] (v2) at (140,0) {};
\node [draw, shape=circle] (v3) at (0,-80) {};
\node [draw, shape=circle] (v4) at (140,-80) {};

\node [scale=50] at (10,-10) {1};
\node [scale=50] at (10,-30) {3};
\node [scale=50] at (10,-50) {5};
\node [scale=50] at (10,-70) {3};

\node [scale=50] at (30,-10) {2};
\node [scale=50] at (30,-30) {4};
\node [scale=50] at (30,-50) {1};
\node [scale=50] at (30,-70) {4};

\node [scale=50] at (50,-10) {3};
\node [scale=50] at (50,-30) {5};
\node [scale=50] at (50,-50) {2};
\node [scale=50] at (50,-70) {5};

\node [scale=50] at (70,-10) {4};
\node [scale=50] at (70,-30) {1};
\node [scale=50] at (70,-50) {3};
\node [scale=50] at (70,-70) {1};

\node [scale=50] at (90,-10) {5};
\node [scale=50] at (90,-30) {2};
\node [scale=50] at (90,-50) {4};
\node [scale=50] at (90,-70) {2};

\node [scale=50] at (110,-10) {4};
\node [scale=50] at (110,-30) {1};
\node [scale=50] at (110,-50) {3};
\node [scale=50] at (110,-70) {1};

\node [scale=50] at (130,-10) {5};
\node [scale=50] at (130,-30) {2};
\node [scale=50] at (130,-50) {4};
\node [scale=50] at (130,-70) {2};

\draw (v1)--(v2)--(v4)--(v3)--(v1);
\node [scale=50] at (-15,-40) {$C$:};


\node [draw, shape=circle] (u1) at (200,0) {};
\node [draw, shape=circle] (u2) at (420,0) {};
\node [draw, shape=circle] (u3) at (200,-80) {};
\node [draw, shape=circle] (u4) at (420,-80) {};

\node [scale=50] at (210,-10) {1};
\node [scale=50] at (210,-30) {3};
\node [scale=50] at (210,-50) {5};
\node [scale=50] at (210,-70) {3};

\node [scale=50] at (230,-10) {2};
\node [scale=50] at (230,-30) {4};
\node [scale=50] at (230,-50) {1};
\node [scale=50] at (230,-70) {4};

\node [scale=50] at (250,-10) {3};
\node [scale=50] at (250,-30) {5};
\node [scale=50] at (250,-50) {2};
\node [scale=50] at (250,-70) {5};

\node [scale=50] at (270,-10) {4};
\node [scale=50] at (270,-30) {1};
\node [scale=50] at (270,-50) {3};
\node [scale=50] at (270,-70) {1};

\node [scale=50] at (290,-10) {5};
\node [scale=50] at (290,-30) {2};
\node [scale=50] at (290,-50) {4};
\node [scale=50] at (290,-70) {2};

\node [scale=50] at (310,-10) {4};
\node [scale=50] at (310,-30) {1};
\node [scale=50] at (310,-50) {3};
\node [scale=50] at (310,-70) {1};

\node [scale=50] at (330,-10) {5};
\node [scale=50] at (330,-30) {2};
\node [scale=50] at (330,-50) {4};
\node [scale=50] at (330,-70) {2};

\node [scale=50] at (350,-10) {4};
\node [scale=50] at (350,-30) {1};
\node [scale=50] at (350,-50) {3};
\node [scale=50] at (350,-70) {1};

\node [scale=50] at (370,-10) {5};
\node [scale=50] at (370,-30) {2};
\node [scale=50] at (370,-50) {4};
\node [scale=50] at (370,-70) {2};

\node [scale=50] at (390,-10) {4};
\node [scale=50] at (390,-30) {1};
\node [scale=50] at (390,-50) {3};
\node [scale=50] at (390,-70) {1};

\node [scale=50] at (410,-10) {5};
\node [scale=50] at (410,-30) {2};
\node [scale=50] at (410,-50) {4};
\node [scale=50] at (410,-70) {2};

\draw (u1)--(u2)--(u4)--(u3)--(u1);
\node [scale=50] at (185,-40) {$D$:};

\end{tikzpicture}
  \caption{The patterns $A$, $B$, $C$ and $D$.}\label{Pat22}
\end{figure}
By Lemma \ref{jjs} and using combinations of the patterns $A$ and $B$, we obtain a $(4\times n)$-pattern $C(n)$, as a $5$-coloring of $C_{4}\boxtimes C_{n}$ for each odd integer $n\geq13$. Using a combination of $k\geq1$ copies of $C(n)$, we get a $(4k\times n)$-pattern as a $5$-coloring of $C_{4k}\boxtimes C_{n}$ for every odd integer $n\geq13$.

In addition, we observe that the patterns $C$ and $D$ given in Figure \ref{Pat22} represent $5$-colorings of $C_{4}\boxtimes C_{7}$ and $C_{4}\boxtimes C_{11}$, respectively. Analogously, a combination of $k$ copies of $A,C,AB,D$ give us a $5$-coloring of $C_{4k}\boxtimes C_{j}$ for $j=5,7,9,11$, respectively. In fact, with the inequality (\ref{five}) in mind, we have proved that $\chi(C_{4k}\boxtimes C_{n})=5$ for all $k\geq1$ and odd integer $n\geq5$.

Consider the direct product graph $C_{4t+2}\boxtimes C_{n}$ for any $t\geq1$ and odd integer $n\geq5$. Let $H$ be a $(4t\times n)$-pattern as a $5$-coloring of $C_{4t}\boxtimes C_{n}$ as above. Let $H'$ be a subpattern of $H$ consisting of the first two rows of $H$. It is easy to check that the $\big{(}(4t+2)\times n\big{)}$-pattern $H''$ obtained from $H$ by considering $H'$ as its last two rows provides us with a $5$-coloring of $C_{4t+2}\boxtimes C_{n}$ for any $t\geq1$ and odd integer $n\geq5$. Consequently, we have $\chi(C_{4t+2}\boxtimes C_{n})=5$ for all $t\geq1$ and odd integer $n\geq5$. The above argument shows that $\chi(C_{2k}\boxtimes C_{n})=5$ for each $k\geq2$ and odd integer $n\geq5$. $(\square)$\vspace{1mm}

We now infer from Claim $1$ that $\chi_{i}(C_{4k}\times C_{n})=\chi(C_{2k}\boxtimes C_{n})=5$ for each $k\geq2$ and odd integer $n\geq5$, and that $\chi_{i}(C_{4k}\times C_{3})=6$ for each $k\geq2$. Furthermore, in the case when $k=1$, we have $\chi_{i}(C_{4}\times C_{2t+1})=\chi(K_{2}\boxtimes C_{2t+1})$. Therefore, $\chi_{i}(C_{4}\times C_{2t+1})=5$ for $t\geq2$. Moreover, it is easy to see that $\chi_{i}(C_{4}\times C_{3})=\chi(K_{2}\boxtimes C_{3})=6$.\vspace{1mm}\\
\textit{Case 3.} Both $m$ and $n$ are even. If $m=4s+2$ and $n=4t+2$ for some $s,t\geq1$, then $\chi_{i}(C_{4s+2}\times C_{4t+2})=\chi(C_{2s+1}\boxtimes C_{2t+1})$. Therefore, $\chi_{i}(C_{4s+2}\times C_{4t+2})=5$ if $s,t\geq2$ as we discussed in Case $1$. On the other hand, for the remaining possible values of $s$ and $t$, it suffices to consider the case when $s=1$. In such a situation, we have $\chi_{i}(C_{6}\times C_{2t+1})=\chi(C_{3}\boxtimes C_{2t+1})$. Hence, $\chi_{i}(C_{6}\times C_{2t+1})=7$ for $t\geq3$. Moreover, it is easy to see that $\chi_{i}(C_{6}\times C_{3})=\chi(C_{3}\boxtimes C_{3})=9$ and $\chi_{i}(C_{6}\times C_{5})=\chi(C_{3}\boxtimes C_{5})=8$.

If $m=4s$ and $n=4t+2$ for some $s,t\geq1$, then $\chi_{i}(C_{4s}\times C_{4t+2})=\chi(C_{2s}\boxtimes C_{2t+1})$ for $s\geq2$ and $t\geq1$. In such a situation, Claim 1 implies that $\chi_{i}(C_{4s}\times C_{6})=6$ for $s\geq2$, and $\chi_{i}(C_{4s}\times C_{4t+2})=5$ for $s,t\geq2$. We also have $\chi_{i}(C_{4}\times C_{6})=6$ and $\chi_{i}(C_{4}\times C_{4t+2})=\chi(K_{2}\boxtimes C_{2t+1})=5$ for $t\geq2$.

Finally, let $m=4s$ and $n=4t$ for some $s,t\geq1$. It is clear that the $(2s\times2t)$-pattern depicted in Figure \ref{Pat44} gives us a $4$-coloring of $C_{2s}\boxtimes C_{2t}$ for each $s,t\geq1$.
\begin{figure}[ht]
 \centering
\begin{tikzpicture}[scale=.02, transform shape]
\node [draw, shape=circle] (v1) at (0,0) {};
\node [draw, shape=circle] (v2) at (160,0) {};
\node [draw, shape=circle] (v3) at (0,-150) {};
\node [draw, shape=circle] (v4) at (160,-150) {};

\node [scale=50] at (10,-10) {1};
\node [scale=50] at (30,-10) {2};
\node [scale=50] at (10,-30) {3};
\node [scale=50] at (30,-30) {4};

\node [scale=50] at (50,-10) {1};
\node [scale=50] at (70,-10) {2};
\node [scale=50] at (50,-30) {3};
\node [scale=50] at (70,-30) {4};

\node [scale=50] at (100,-10) {$\cdots$};
\node [scale=50] at (100,-30) {$\cdots$};

\node [scale=50] at (130,-10) {1};
\node [scale=50] at (150,-10) {2};
\node [scale=50] at (130,-30) {3};
\node [scale=50] at (150,-30) {4};


\node [scale=50] at (10,-50) {1};
\node [scale=50] at (30,-50) {2};
\node [scale=50] at (10,-70) {3};
\node [scale=50] at (30,-70) {4};

\node [scale=50] at (50,-50) {1};
\node [scale=50] at (70,-50) {2};
\node [scale=50] at (50,-70) {3};
\node [scale=50] at (70,-70) {4};

\node [scale=50] at (100,-50) {$\cdots$};
\node [scale=50] at (100,-70) {$\cdots$};

\node [scale=50] at (130,-50) {1};
\node [scale=50] at (150,-50) {2};
\node [scale=50] at (130,-70) {3};
\node [scale=50] at (150,-70) {4};

\node [scale=50] at (20,-90) {$\vdots$};
\node [scale=50] at (60,-90) {$\vdots$};
\node [scale=50] at (140,-90) {$\vdots$};


\node [scale=50] at (10,-120) {1};
\node [scale=50] at (30,-120) {2};
\node [scale=50] at (10,-140) {3};
\node [scale=50] at (30,-140) {4};

\node [scale=50] at (50,-120) {1};
\node [scale=50] at (70,-120) {2};
\node [scale=50] at (50,-140) {3};
\node [scale=50] at (70,-140) {4};

\node [scale=50] at (100,-120) {$\cdots$};
\node [scale=50] at (100,-140) {$\cdots$};

\node [scale=50] at (130,-120) {1};
\node [scale=50] at (150,-120) {2};
\node [scale=50] at (130,-140) {3};
\node [scale=50] at (150,-140) {4};

\draw (v1)--(v2)--(v4)--(v3)--(v1);

\end{tikzpicture}
  \caption{An optimal $4$-coloring of $C_{2s}\boxtimes C_{2t}$ for each $s,t\geq1$. Here we let $C_{2}=K_{2}$ for the sake of convenience.}\label{Pat44}
\end{figure}
We here assume $C_{2}=K_{2}$ for the sake of convenience. On the other hand, we get $\chi_{i}(C_{4s}\times C_{4t+2})=\chi(C_{2s}\boxtimes C_{2t})=4$ since $\omega(C_{2s}\boxtimes C_{2t})=4$. This completes the proof.
\end{proof}


\section{Lexicographic and corona product graphs}\label{Sect:pro}

Our first aim in this section is to give sharp lower and upper bounds on $\chi_{i}(G\circ H)$. We also prove that $\chi_{2}$ and $\chi_{i}$ are the same in the case of lexicographic product graphs when both $G$ and $H$ have no isolated vertices.

\begin{theorem}\label{Lexico}
Let $G$ be a connected graph of order at least two and let $H$ be any graph with $i_{H}$ isolated vertices. Then,
$$\chi_{i}(G\circ H)\leq \chi_{2}(G)|V(H)|-i_{H}\big{(}\chi_{2}(G)-\chi_{i}(G)\big{)}.$$
Moreover, if $H$ has no isolated vertices, then
$$\chi_{i}(G\circ H)=\chi_{2}(G\circ H)\geq(\Delta(G)+1)|V(H)|.$$
\end{theorem}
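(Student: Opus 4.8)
The plan is to pin down exactly when two vertices of $G\circ H$ have a common neighbour, and then to build an injective coloring by superposing a $2$-distance coloring and an injective coloring of the single factor $G$. Writing $H_{g}=\{g\}\times V(H)$ for the fibre over $g$, I would first record the following. Since $G$ is connected of order at least two, every $g$ has a $G$-neighbour $g''$, so for $h\neq h'$ the vertex $(g'',h'')$ is adjacent to both $(g,h)$ and $(g,h')$; hence \emph{all} vertices of a single fibre pairwise have a common neighbour. For $g\neq g'$, a direct inspection of the definition of $\circ$ shows that $(g,h)$ and $(g',h')$ have a common neighbour if and only if either (i) $g$ and $g'$ have a common neighbour in $G$, or (ii) $gg'\in E(G)$ and at least one of $h,h'$ is non-isolated in $H$. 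In particular a common neighbour forces $d_{G}(g,g')\leq 2$.

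For the upper bound I would split $V(H)=V(H^{-})\cup I_{H}$ into the non-isolated and the $i_{H}$ isolated vertices, fix a $\chi_{2}(G)$-coloring $c$ and a $\chi_{i}(G)$-coloring $c'$ of $G$, and use two disjoint palettes: color $(g,h)$ by the pair $(c(g),h)$ when $h\in V(H^{-})$, and by $(c'(g),h)$ when $h\in I_{H}$. These palettes have sizes $\chi_{2}(G)(|V(H)|-i_{H})$ and $\chi_{i}(G)\,i_{H}$, whose sum is exactly $\chi_{2}(G)|V(H)|-i_{H}\big(\chi_{2}(G)-\chi_{i}(G)\big)$. It remains to check that this $f$ is an injective coloring, i.e.\ that every pair with a common neighbour is bichromatic. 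Same-fibre pairs are fine, since within one palette the second coordinate separates them and across palettes the colors are trivially distinct. For $g\neq g'$ I use the dichotomy above: in case (i) we have $c(g)\neq c(g')$ (as $d_{G}(g,g')\le 2$) and $c'(g)\neq c'(g')$ (common $G$-neighbour), so each palette separates its pairs while mixed-palette pairs are automatically distinct; in case (ii), if both endpoints lie in $V(H^{-})$ then $c(g)\neq c(g')$ because $gg'\in E(G)$, and otherwise exactly one endpoint is isolated, so the two fall in different palettes. Finally, two isolated-$H$ vertices in adjacent fibres with no common $G$-neighbour may receive equal colors, but by (i)--(ii) such a pair has no common neighbour in $G\circ H$, so no constraint is violated.

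For the second statement assume $H$ has no isolated vertices. I would first upgrade the analysis to show that every \emph{edge} of $G\circ H$ joins two vertices with a common neighbour: a same-fibre edge is handled as above, and for an edge with $gg'\in E(G)$ any $H$-neighbour $h''$ of $h'$ yields the common neighbour $(g',h'')$. Thus the common-neighbour pairs coincide with the distance-$\le 2$ pairs, that is $\mathcal{N}(G\circ H)=(G\circ H)^{2}$, and \eqref{open} together with \eqref{closed} gives $\chi_{i}(G\circ H)=\chi_{2}(G\circ H)$. For the lower bound I would exhibit a large clique in $(G\circ H)^{2}$: take $g_{0}$ of degree $\Delta(G)$ with neighbours $g_{1},\dots,g_{\Delta(G)}$ and consider the $(\Delta(G)+1)|V(H)|$ vertices in the fibres $H_{g_{0}},\dots,H_{g_{\Delta(G)}}$. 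Any two of them lie at distance at most two: same-fibre pairs as before, pairs in adjacent fibres are adjacent, and pairs in two non-adjacent neighbour-fibres $H_{g_{i}},H_{g_{j}}$ with $i,j\ge 1$ share the common $G$-neighbour $g_{0}$. Hence $\chi_{2}(G\circ H)=\chi\big((G\circ H)^{2}\big)\ge(\Delta(G)+1)|V(H)|$, which finishes the proof.

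I expect the main obstacle to be the bookkeeping in the common-neighbour characterisation (i)--(ii) and, above all, verifying that the two-palette coloring never repeats a color on a constrained pair. The delicate point is precisely that adjacent isolated-$H$ fibres lacking a common $G$-neighbour impose \emph{no} constraint; isolating this case cleanly from the genuinely adjacent pairs that require the stronger $\chi_{2}(G)$ separation is exactly what licenses the $\chi_{i}(G)$-saving recorded in the bound.
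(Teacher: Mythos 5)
Your proposal is correct and follows essentially the same route as the paper: the same two-palette partition (injective color classes of $G$ on the isolated-vertex fibres, $2$-distance color classes on the rest) for the upper bound, the same every-edge-lies-on-a-triangle argument to get $\chi_{i}(G\circ H)=\chi_{2}(G\circ H)$, and the same observation that $N_{G}[g]\times V(H)$ has pairwise distances at most two for the lower bound. Your upfront common-neighbour dichotomy (i)--(ii) is merely a more systematic packaging of the case checks the paper performs inline, including the key point that adjacent isolated-$H$ fibres without a common $G$-neighbour impose no constraint.
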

\begin{proof}
Let $\mathbb{A}=\{A_{1},\dots,A_{\chi_{i}(G)}\}$ and $\mathbb{B}=\{B_{1},\dots,B_{\chi_{2}(G)}\}$ be a $\chi_{i}(G)$-coloring and a $\chi_{2}(G)$-coloring, respectively. Also, let $I_{H}$ be the set of isolated vertices of $H$. We set
$$\mathbb{P}=\{A_{i}\times \{h\}\mid 1\leq i\leq \chi_{i}(G), h\in I_{H}\}\cup \{B_{i}\times \{h\}\mid 1\leq i\leq \chi_{2}(G), h\in V(H)\setminus I_{H}\}.$$
Clearly, $\mathbb{P}$ is a vertex partition of $G\circ H$.

Suppose that there exists a vertex $(g,h')$ adjacent to two distinct vertices $(g',h),(g'',h)\in A_{i}\times \{h\}$, for some $1\leq i\leq \chi_{i}(G)$ and $h\in I_{H}$. Note first that $g$ cannot simultaneously be adjacent to both $g'$ and $g''$, due to the fact that $A_{i}$ is an open packing in $G$. So, it must happen, without loss of generality, that $g=g''$. However, this means that $hh'$ is an edge of $H$, which is a contradiction to the fact that $h$ is an isolated vertex of $H$. Therefore, $A_{i}\times \{h\}$ is an open packing in $G\circ H$.

If $(g,h')$ is adjacent to two distinct vertices $(g',h),(g'',h)\in B_{i}\times \{h\}$ for some $1\leq i\leq \chi_{2}(G)$ and $h\in V(H)\setminus I_{H}$, then $g',g''\in N_{G}[g]\cap B_{i}$. This is a contradiction since any two vertices of $B_{i}$ are at distance larger than two in $G$. This shows that $B_{i}\times \{h\}$ is a open packing in $G\circ H$. So, we have concluded that $\mathbb{P}$ is an injective coloring of $G\circ H$. Therefore,
\begin{equation}\label{Lexico1}
\chi_{i}(G\circ H)\leq|\mathbb{P}|=\chi_{i}(G)i_{H}+\chi_{2}(G)\big{(}|V(H)|-i_{H}\big{)}=\chi_{2}(G)|V(H)|-i_{H}\big{(}\chi_{2}(G)-\chi_{i}(G)\big{)}.
\end{equation}

The bound is sharp for a large number of infinite families of graphs. For instance, consider the lexicographic product graph $F=P_{r}\circ(P_{s}+\overline{P_{t}})$ with $r\geq3$, $s\geq2$ and $t\geq1$. Let $V(P_r)=\{u_{1},\dots,u_{r}\}$, $V(P_s)=\{v_{1},\dots,v_{s}\}$ and $V(\overline{P_t})=\{w_{1},\dots,w_{t}\}$. Set $F'=F[\{u_{1},u_{2},u_{3}\}\times\big{(}V(P_{s})\cup V(\overline{P_{t}})\big{)}]$. Let $f':V(F')\rightarrow\{1,2,\dots,\chi_{i}(F')\}$ be any $\chi_{i}(F')$-coloring. Note that in the subgraph of $F'$ induced by $\{u_{1},u_{2},u_{3}\}\times V(P_{s})$, each edge lies on a triangle. Therefore, no two vertices of this induced subgraph receive the same color by $f'$. This show that $f'$ assign $3s$ colors to the vertices of $\{u_{1},u_{2},u_{3}\}\times V(P_{s})$. On the other hand, because $(u_{2},v_{1})$ is adjacent to all vertices in $\{u_{1},u_{3}\}\times\big{(}V(P_{s})\cup V(P_{t})\big{)}$, it follows that every vertex in this set receives a unique color by $f'$. In fact, we observe that $f'$ assigns $2t$ colors to the vertices in $Q=\{u_{1},u_{3}\}\times V(P_{t})$ and that $f'\big{(}\{u_{1},u_{2},u_{3}\}\times V(P_{s})\big{)}\cap f'(Q)=\emptyset$. The above discussion shows that
\begin{equation*}
\begin{array}{lcl}
\chi_{i}(F)\geq \chi_{i}(F')&=&|f'\big{(}\{u_{1},u_{2},u_{3}\}\times V(P_{s})\big{)}|+\sum_{i=1}^{t}|f'(\{u_{1},u_{2},u_{3}\}\times\{w_{i}\})|\\
&\geq&3s+2t\\
&=&\chi_{2}(P_{r})|V(P_{s}+\overline{P_{t}})|-i_{P_{s}+\overline{P_{t}}}(\chi_{2}(P_{r})-\chi_{i}(P_{r})).
\end{array}
\end{equation*}
This results in the equality in the upper bound.

Suppose now that $H$ has no isolated vertices. Let $(g,h)(g',h')\in E(G\circ H)$. If $g=g'$, then $(g'',h)$ is adjacent to both $(g,h)$ and $(g',h')$ in which $g''$ is any vertex of $G$ adjacent to $g$. Suppose that $gg'\in E(G)$. There exists $h''\in V(H)$ adjacent to $h$ because $H$ does not have isolated vertices. So, $(g,h'')$ is adjacent to both $(g,h)$ and $(g',h')$ by the adjacency rule of the lexicographic product graphs. In fact, every edge of $G\circ H$  lies on a triangle. This shows that every open packing in $G\circ H$ is an independent set. In particular, a subset of $V(G\circ H)$ is a packing if and only if it is an open packing. Therefore, $\chi_{i}(G\circ H)=\chi_{2}(G\circ H)$.

Let $g$ be a vertex of $G$ of maximum degree. It happens that $\diam\big{(}(G\circ H)[N_{G}[g]\times V(H)]\big{)}\leq2$. This in particular implies that any $2$-distance coloring of $G\circ H$ assigns at least $|N_{G}[g]\times V(H)|=(\Delta(G)+1)|V(H)|$ colors to the vertices of $G\circ H$. Consequently, $\chi_{i}(G\circ H)=\chi_{2}(G\circ H)\geq(\Delta(G)+1)|V(H)|$.

That the lower bound is sharp, may be seen as follows. It is known that $\chi_{2}(T)=\Delta(T)+1$ for any tree $T$ (see Theorem 2.4 in \cite{pr} for $k=2$). Let $T$ be a nontrivial tree and let $H$ be any graph with no isolated vertices. Then, $\chi_{i}(T\circ H)=\chi_{2}(T\circ H)=(\Delta(T)+1)|V(H)|$ by considering both lower and upper bounds. This completes the proof.
\end{proof}

Ghazi et al. \cite{grt} exhibited the exact formula $\chi_{2}(G\circ H)=\chi_{2}(G)|V(H)|$ for all connected graphs $G$ and $H$. In what follows, we show that this equality is not true as it stands. In Figure \ref{Counterexample}, we consider the lexicographic product graph $C_{7}\circ C_{5}$ without drawing the edges (for the sake of convenience). Note that the assigned numbers to the vertices represent a $2$DC of $C_{7}\circ C_{5}$ with $18$ colors. So, $\chi_{2}(C_{7}\circ C_{5})\leq18<20=\chi_{2}(C_{7})|V(C_{5})|$.

\begin{figure}[ht]
 \centering
\begin{tikzpicture}[scale=.50, transform shape]
\node [draw, shape=circle] (v1) at  (0,0) {};
\node [draw, shape=circle] (v2) at  (2.5,0) {};
\node [draw, shape=circle] (v3) at  (5,0) {};
\node [draw, shape=circle] (v4) at  (7.5,0) {};
\node [draw, shape=circle] (v5) at  (10,0) {};

\node [draw, shape=circle] (v1) at  (0,-1.5) {};
\node [draw, shape=circle] (v2) at  (2.5,-1.5) {};
\node [draw, shape=circle] (v3) at  (5,-1.5) {};
\node [draw, shape=circle] (v4) at  (7.5,-1.5) {};
\node [draw, shape=circle] (v5) at  (10,-1.5) {};

\node [draw, shape=circle] (v1) at  (0,-3) {};
\node [draw, shape=circle] (v2) at  (2.5,-3) {};
\node [draw, shape=circle] (v3) at  (5,-3) {};
\node [draw, shape=circle] (v4) at  (7.5,-3) {};
\node [draw, shape=circle] (v5) at  (10,-3) {};

\node [draw, shape=circle] (v1) at  (0,-4.5) {};
\node [draw, shape=circle] (v2) at  (2.5,-4.5) {};
\node [draw, shape=circle] (v3) at  (5,-4.5) {};
\node [draw, shape=circle] (v4) at  (7.5,-4.5) {};
\node [draw, shape=circle] (v5) at  (10,-4.5) {};

\node [draw, shape=circle] (v1) at  (0,-6) {};
\node [draw, shape=circle] (v2) at  (2.5,-6) {};
\node [draw, shape=circle] (v3) at  (5,-6) {};
\node [draw, shape=circle] (v4) at  (7.5,-6) {};
\node [draw, shape=circle] (v5) at  (10,-6) {};

\node [draw, shape=circle] (v1) at  (0,-7.5) {};
\node [draw, shape=circle] (v2) at  (2.5,-7.5) {};
\node [draw, shape=circle] (v3) at  (5,-7.5) {};
\node [draw, shape=circle] (v4) at  (7.5,-7.5) {};
\node [draw, shape=circle] (v5) at  (10,-7.5) {};

\node [draw, shape=circle] (v1) at  (0,-9) {};
\node [draw, shape=circle] (v2) at  (2.5,-9) {};
\node [draw, shape=circle] (v3) at  (5,-9) {};
\node [draw, shape=circle] (v4) at  (7.5,-9) {};
\node [draw, shape=circle] (v5) at  (10,-9) {};

\node [scale=1.7] at (-0.5,0) {\large $1$};
\node [scale=1.7] at (2,0) {\large $5$};
\node [scale=1.7] at (4.5,0) {\large $8$};
\node [scale=1.7] at (6.8,0) {\large $12$};
\node [scale=1.7] at (9.3,0) {\large $15$};

\node [scale=1.7] at (-0.5,-1.5) {\large $2$};
\node [scale=1.7] at (2,-1.5) {\large $6$};
\node [scale=1.7] at (4.3,-1.5) {\large $9$};
\node [scale=1.7] at (6.8,-1.5) {\large $13$};
\node [scale=1.7] at (9.3,-1.5) {\large $16$};

\node [scale=1.7] at (-0.5,-3) {\large $3$};
\node [scale=1.7] at (2,-3) {\large $7$};
\node [scale=1.7] at (4.3,-3) {\large $10$};
\node [scale=1.7] at (6.8,-3) {\large $14$};
\node [scale=1.7] at (9.3,-3) {\large $17$};

\node [scale=1.7] at (-0.5,-4.5) {\large $1$};
\node [scale=1.7] at (2,-4.5) {\large $4$};
\node [scale=1.7] at (4.5,-4.5) {\large $8$};
\node [scale=1.7] at (6.8,-4.5) {\large $11$};
\node [scale=1.7] at (9.3,-4.5) {\large $15$};

\node [scale=1.7] at (-0.5,-6) {\large $2$};
\node [scale=1.7] at (2,-6) {\large $5$};
\node [scale=1.7] at (4.3,-6) {\large $9$};
\node [scale=1.7] at (6.8,-6) {\large $12$};
\node [scale=1.7] at (9.3,-6) {\large $16$};

\node [scale=1.7] at (-0.5,-7.5) {\large $3$};
\node [scale=1.7] at (2,-7.5) {\large $6$};
\node [scale=1.7] at (4.3,-7.5) {\large $10$};
\node [scale=1.7] at (6.8,-7.5) {\large $13$};
\node [scale=1.7] at (9.3,-7.5) {\large $17$};

\node [scale=1.7] at (-0.5,-9) {\large $4$};
\node [scale=1.7] at (2,-9) {\large $7$};
\node [scale=1.7] at (4.3,-9) {\large $11$};
\node [scale=1.7] at (6.8,-9) {\large $14$};
\node [scale=1.7] at (9.3,-9) {\large $18$};
\end{tikzpicture}
  \caption{A counterexample to the formula $\chi_{2}(G\circ H)=\chi_{2}(G)|V(H)|$ for all connected graphs $G$ and $H$.}\label{Counterexample}
\end{figure}

Let $G$ and $H$ be graphs and $V(G)=\{v_1,\ldots,v_{n}\}$. We recall that the {\em corona product} $G\odot H$ of graphs $G$ and $H$ is obtained from the disjoint union of $G$ and $n$ disjoint copies of $H$, say $H_1,\ldots, H_{n}$, such that the vertex $v_i\in V(G)$ is adjacent to every vertex of $H_i$ for all $i\in \{1,\dots,n\}$. We next present a closed formula for $\chi_{i}(G\odot H)$.

\begin{theorem}\label{Cor}
For any graphs $G$ and $H$ with no isolated vertices,
$$\chi_{i}(G\odot H)\in\big{\{}\chi_{i}(G),|V(H)|+\Delta(G),|V(H)|+\Delta(G)+1\big{\}}.$$
\end{theorem}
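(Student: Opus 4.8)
The plan is to bracket $\chi_i(G\odot H)$ between two lower bounds and one upper bound, and then resolve the three admissible values by a short case distinction. First I would record two lower bounds. For the first, I would compute $\Delta(G\odot H)$: a vertex $v_i\in V(G)$ has degree $\deg_G(v_i)+|V(H)|$ (its $G$-neighbours together with the whole copy $H_i$), while a vertex $h$ of $H_i$ has degree $\deg_H(h)+1\le |V(H)|$; since $G$ has no isolated vertex we have $\Delta(G)\ge 1$, so the maximum is attained at a vertex of $V(G)$ and $\Delta(G\odot H)=|V(H)|+\Delta(G)$. Then \eqref{EQ1} gives $\chi_i(G\odot H)\ge |V(H)|+\Delta(G)$. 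For the second lower bound I would observe that two vertices $v_i,v_j\in V(G)$ have a common neighbour in $G\odot H$ if and only if they have one in $G$ (the copies $H_i,H_j$ and the cross terms contribute nothing), so restricting any injective colouring of $G\odot H$ to $V(G)$ yields an injective colouring of $G$; hence $\chi_i(G\odot H)\ge \chi_i(G)$. Together, $\chi_i(G\odot H)\ge\max\{\chi_i(G),\,|V(H)|+\Delta(G)\}$.

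The heart of the argument is an exact description of the injective-colouring constraints of $G\odot H$, which I would establish by checking common neighbourhoods. The relevant facts are: all vertices of a fixed copy $H_i$ pairwise share the neighbour $v_i$, hence must be rainbow-coloured; each vertex of $H_i$ shares a neighbour with $v_i$ (an $H$-neighbour, which exists because $H$ has no isolated vertex) and with every $v_j\in N_G(v_i)$ (through $v_i$), so its colour must avoid $C_i:=f(N_G[v_i])$; and two vertices lying in different copies $H_i,H_j$ share no neighbour, so distinct copies impose no mutual restriction. This reduces the extension problem to colouring each $H_i$ with $|V(H)|$ distinct colours avoiding the set $C_i$, where $|C_i|\le |N_G[v_i]|=\deg_G(v_i)+1\le\Delta(G)+1$.

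For the upper bound I would then set $T=\max\{\chi_i(G),\,|V(H)|+\Delta(G)+1\}$, fix an injective colouring $f$ of $G$ with colours in $\{1,\dots,\chi_i(G)\}\subseteq\{1,\dots,T\}$, and for each $i$ assign to $V(H_i)$ any $|V(H)|$ distinct colours from $\{1,\dots,T\}\setminus C_i$. This is possible because at least $T-(\Delta(G)+1)\ge |V(H)|$ colours remain available, and the worst case $|C_i|=\Delta(G)+1$ together with $T=|V(H)|+\Delta(G)+1$ leaves exactly $|V(H)|$ free colours. A direct neighbourhood check (for a vertex $v_i$ its $G$-neighbours are already rainbow under $f$, the copy $H_i$ is rainbow by construction, and the two families use disjoint colour sets because $H_i$ avoids $C_i$; for a vertex of $H_i$ its $H$-neighbours are rainbow and differ in colour from $v_i$) confirms that the result is a genuine injective colouring, so $\chi_i(G\odot H)\le T$.

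Finally I would combine the bounds. If $\chi_i(G)\ge |V(H)|+\Delta(G)+1$, then both the lower and the upper bound equal $\chi_i(G)$, forcing $\chi_i(G\odot H)=\chi_i(G)$; otherwise $\chi_i(G)\le |V(H)|+\Delta(G)$, the lower bound equals $|V(H)|+\Delta(G)$ and the upper bound equals $|V(H)|+\Delta(G)+1$, so $\chi_i(G\odot H)\in\{|V(H)|+\Delta(G),\,|V(H)|+\Delta(G)+1\}$. In all cases the value lies in $\{\chi_i(G),\,|V(H)|+\Delta(G),\,|V(H)|+\Delta(G)+1\}$. I expect the only delicate point to be the exact bookkeeping of the preceding paragraph, namely verifying that $N_G[v_i]$ (and not merely $N_G(v_i)$) is the forbidden palette for $H_i$ and that excluding these $\le\Delta(G)+1$ colours still leaves precisely enough room; once this is pinned down the case analysis is routine.
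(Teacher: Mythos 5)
Your proof is correct, and its logical architecture is genuinely different from the paper's even though the building blocks overlap. The paper establishes the same two lower bounds you do (the restriction-to-$V(G)$ argument giving $\chi_i(G\odot H)\ge\chi_i(G)$, and inequality \eqref{EQ1} applied with $\Delta(G\odot H)=|V(H)|+\Delta(G)$), and its extension construction is in essence yours: colour each copy $H_i$ rainbow while avoiding the palette $f(N_G[v_i])$, introducing new colours only when the old ones run out. But the paper organizes everything as an exhaustive case analysis on $|V(H)|$ versus $\chi_i(G)-\Delta(G)$, with a further subcase distinguishing whether every maximum-degree vertex of $G$ has a neighbour inside its own colour class; it does this because it aims to indicate \emph{which} of the three values occurs in each regime, information it then uses after the theorem to exhibit families attaining all three values. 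You instead prove two unconditional bounds, $\max\{\chi_i(G),|V(H)|+\Delta(G)\}\le\chi_i(G\odot H)\le\max\{\chi_i(G),|V(H)|+\Delta(G)+1\}$, and membership in the three-element set drops out of a trivial comparison of the two maxima. Your route is shorter and sidesteps the paper's most delicate point --- the subcase condition about maximum-degree vertices, which as written depends on the particular optimal colouring $\mathbb{A}$ chosen --- and your explicit reduction (distinct copies impose no mutual constraints; $H_i$ must be rainbow and must avoid exactly $f(N_G[v_i])$, a set of at most $\Delta(G)+1$ colours, where $f(v_i)$ is forbidden precisely because $H$ has no isolated vertices) makes the sufficiency verification cleaner than the paper's inline checks. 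What you give up is the finer attainability information: your argument alone does not say when $\chi_i(G\odot H)=\chi_i(G)$ rather than one of the two larger values, which the paper's case split (partially) supplies. The delicate points you flagged --- that the forbidden palette is $f(N_G[v_i])$ and not merely $f(N_G(v_i))$, and that deleting at most $\Delta(G)+1$ colours from a palette of size $\max\{\chi_i(G),|V(H)|+\Delta(G)+1\}$ leaves at least $|V(H)|$ colours free --- both check out as you stated them.
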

\begin{proof}
Clearly, any $\chi_{i}(G\odot H)$-coloring assigns at least $\chi_{i}(G)$ colors to the vertices of $G$ since $G$ is a subgraph of $G\odot H$. So, $\chi_{i}(G\odot H)\geq \chi_{i}(G)$.

For each $1\leq i\leq|V(G)|$, assume that $V(H_{i})=\{u_{i1},\dots,u_{i|V(H)|}\}$. Let $\mathbb{A}=\{A_{1},\dots,A_{\chi_{i}(G)}\}$ be a $\chi_{i}(G)$-coloring. In what follows, we construct a mapping $f$ on $V(G\odot H)$ that assigns the colors $1,\dots,\chi_{i}(G)$ to the vertices in $A_{1},\dots,A_{\chi_{i}(G)}$, respectively. In particular, $f$ turns out to be a $\chi_{i}(G)$-coloring. We consider two cases depending on $\chi_{i}(G)$.\vspace{1.5mm}\\
\textit{Case 1.} $|V(H)|\leq \chi_{i}(G)-\Delta(G)-1$. We choose an arbitrary vertex $v_{i}$ and let it be in $A_k$. We now extend $f$ by assigning $|V(H)|$ colors $r_1,\dots,r_{|V(H)|}\in\{1,\dots,\chi_{i}(G)\}$ to the vertices $u_{i1},\dots,u_{i|V(H)|}$ such that $N_{G}[v_{i}]\cap A_{r_j}=\emptyset$ for every $1\le j\le |V(H)|$ (there do exist such colors since $|V(H)|\leq \chi_{i}(G)-\deg_{G}(v_i)-1$). By iterating this process for all vertices in $V(G)$, we note that $f$ is an injective coloring of $G\odot H$ assigning $\chi_{i}(G)$ colors to the vertices of $G\odot H$. Therefore $\chi_{i}(G\odot H)\leq \chi_{i}(G)$, and hence $\chi_{i}(G\odot H)=\chi_{i}(G)$.\vspace{1.5mm}\\
\textit{Case 2.} $|V(H)|\geq \chi_{i}(G)-\Delta(G)$. We need to distinguish two more possibilities depending also on the behavior of vertices of maximum degree in $G$.\vspace{0.5mm}\\
\textit{Subcase 2.1.} Suppose that we have ``$|V(H)|=\chi_{i}(G)-\Delta(G)$" and the property that ``every vertex $v_j$ of maximum degree in $G$ has a (unique) neighbor in the open packing (color class) from $\mathbb{A}$ containing $v_j$". In such a situation, similarly to the argument given in Case $1$, $G\odot H$ can be injectively colored with $\chi_{i}(G)$ colors. Thus, $\chi_{i}(G\odot H)=\chi_{i}(G)$.\vspace{0.5mm}\\
\textit{Subcase 2.2.} Suppose that ``$|V(H)|>\chi_{i}(G)-\Delta(G)$" or we have ``$|V(H)|=\chi_{i}(G)-\Delta(G)$ with the property that there exists a vertex $v_j$ of maximum degree in $G$ having no neighbor in the open packing (color class) from $\mathbb{A}$ containing $v_j$". Moreover, we consider the following facts:\\
$\bullet$ no vertex of $H_j$ receives the color $f(v_j)$, otherwise there would be a vertex of $H_j$ adjacent to at least two vertices with the same color $f(v_j)$ since $H$ has no isolated vertices; and\\
$\bullet$ none of the colors, assigned to the vertices of those open packing sets (color classes) from $\mathbb{A}$ containing the neighbors of $v_j$, can be assigned to the vertices of $H_j$.\\
The above argument shows that $G\odot H$ cannot be injectively colored with $\chi_{i}(G)$ colors. Hence, $\chi_{i}(G\odot H)>\chi_{i}(G)$.

Obviously, $f$ assigns at most $\Delta(G)+1$ colors to the vertices in $N_{G}[v_i]$ for each $1\leq i\leq n$. We now choose a vertex $v_{i}$ and let it be in $A_{k}$. If $|V(H)|\leq \chi_{i}(G)-\deg_{G}(v_i)-1$, then we assign $|V(H)|$ colors $r_1,\dots,r_{|V(H)|}\in\{1,\dots,\chi_{i}(G)\}$ to the vertices $u_{i1},\dots,u_{i|V(H)|}$ such that $N_{G}[v_{i}]\cap A_{r_j}=\emptyset$, with $1\le j\le |V(H)|$, similarly to Case $1$ (indeed, $f$ assigns at most $\chi_{i}(G)$ colors among $\{1,\cdots,\chi_{i}(G)\}$ to the vertices in $V(H_i)\cup V(G)$). Otherwise, we deal with the following two possibilities.\vspace{1.5mm}\\
\textit{Subcase 2.2.1.} $\chi_{i}(G)\in \{\deg_{G}(v_i),\deg_{G}(v_i)+1\}$. In such a situation, we assign $|V(H)|$ new colors $1',\cdots,|V(H)|'$ to the vertices $u_{i1},\dots,u_{i|V(H)|}$, respectively. In fact, $f$ has used $|V(H)|+\Delta(G)$ or $|V(H)|+\Delta(G)+1$ colors in order to injectively color the vertices in $V(H_i)\cup V(G)$.\vspace{1.5mm}\\
\textit{Subcase 2.2.2.} $\chi_{i}(G)>\deg_{G}(v_i)+1$. We then assign $k(i)=\chi_{i}(G)-\deg_{G}(v_i)-1$ colors $r_1,\dots,r_{k(i)}\in\{1,\dots,\chi_{i}(G)\}$ to the vertices $u_{i1},\dots,u_{ik(i)}$ such that $N_{G}[v_{i}]\cap A_{r_j}=\emptyset$ with $1\le j\le k(i)$, and $t(i)=|V(H)|-k(i)$ new colors $1',\cdots,t(i)'$ to the vertices $u_{i(k(i)+1)},\dots,u_{i|V(H)|}$, respectively.

Iterating this process for all vertices $v_i$ with $\chi_{i}(G)>\deg_{G}(v_i)+1$, we observe that $f$ assigns at most
$$\chi_{i}(G)+\max_{i}\{t(i)\}=\chi_{i}(G)+|V(H)|-\chi_{i}(G)+\Delta(G)+1=|V(H)|+\Delta(G)+1$$
colors in order to injectively color the vertices in $V(H_i)\cup V(G)$.

Notice that the extension of $f$ given in Subcases 2.2.1 and 2.2.2 results in an injective coloring of $G\odot H$. From this fact, we deduce that $\chi_{i}(G\odot H)\leq|V(H)|+\Delta(G)+1$. On the other hand, $\chi_{i}(G\odot H)\geq \Delta(G\odot H)=|V(H)|+\Delta(G)$ by the inequality (\ref{EQ1}). Therefore, $\chi_{i}(G\odot H)$ equals either $|V(H)|+\Delta(G)$ or $|V(H)|+\Delta(G)+1$.\vspace{0.5mm}

Altogether, the arguments above show that $\chi_{i}(G\odot H)\in\big{\{}\chi_{i}(G),|V(H)|+\Delta(G),|V(H)|+\Delta(G)+1\big{\}}$.
\end{proof}

We conclude this section with remarking that $\chi_{i}(G\odot H)$ assumes all three values given in Theorem \ref{Cor} depending on our choices for $G$ and $H$. To see this, let $G=K_{r}\square K_{s}$ for two integers $r,s\geq3$. It is clear that any injective coloring $f$ of $G\odot K_{2}$ assigns $rs$ colors, say $1,2,\dots,rs$, to the vertices in $V(G)$. Moreover, by assigning two colors from $\{1,2,\dots,rs\}\setminus \{f(u)\mid u\in N_{G}[v]\}$ to the vertices in $N_{G\odot K_{2}}[v]\setminus N_{G}[v]$ for each $v\in V(G)$, we get an injective coloring of $G\odot K_{2}$ with $rs$ colors. Therefore, $\chi_{i}(G\odot K_{2})=rs=\chi_{i}(G)$.

Bre\v{s}ar et al. \cite{bsy} showed that $\chi_{i}(T)=\Delta(T)$ for any tree $T$ on at least two vertices. With this in mind, taking $H$ to be any graph with no isolated vertices, we observe that $T\odot H$ satisfies the assumption given in Subcase 2.2 in the proof of Theorem \ref{Cor}. Hence, $\chi_{i}(T\odot H)\in \{|V(H)|+\Delta(T),|V(H)|+\Delta(T)+1\}$. On the other hand, any injective coloring of $T$ with $\Delta(T)$ colors can be extended to an injective coloring of $T\odot H$ with $|V(H)|+\Delta(T)$ colors by assigning $|V(H)|$ new colors to the vertices of $H_{1},\dots,H_{|V(T)|}$. This leads to $\chi_{i}(T\odot H)=|V(H)|+\Delta(T)$.

Finally, we observe that for any graph $H$ with no isolated vertices, $\chi_{i}(K_{n}\odot H)=n+|V(H)|=|V(H)|+\Delta(K_{n})+1$ for $n\geq3$.


\section*{Acknowledgments}

B. Samadi and N. Soltankhah have been supported by the Discrete Mathematics Laboratory of the Faculty of Mathematical Sciences at Alzahra University.


\end{document}